\allowdisplaybreaks \numberwithin{equation}{section}
\numberwithin{equation}{section}
\newtheorem{theorem}{Theorem}[section]
\newtheorem{lemma}[theorem]{Lemma}
\theoremstyle{definition}
\theoremstyle{remark}
\newtheorem{remark}[theorem]{Remark}
\begin{document}

\title
[K\'arm\'an vortex street for the gSQG equation]{K\'arm\'an vortex street for the generalized surface quasi-geostrophic equation}

\author{Daomin Cao, Guolin Qin,  Weicheng Zhan, Changjun Zou}

\address{Institute of Applied Mathematics, Chinese Academy of Sciences, Beijing 100190, and University of Chinese Academy of Sciences, Beijing 100049,  P.R. China}
\email{dmcao@amt.ac.cn}
\address{Institute of Applied Mathematics, Chinese Academy of Sciences, Beijing 100190, and University of Chinese Academy of Sciences, Beijing 100049,  P.R. China}
\email{qinguolin18@mails.ucas.edu.cn}
\address{Institute of Applied Mathematics, Chinese Academy of Sciences, Beijing 100190, and University of Chinese Academy of Sciences, Beijing 100049,  P.R. China}
\email{zhanweicheng16@mails.ucas.ac.cn}
\address{Institute of Applied Mathematics, Chinese Academy of Sciences, Beijing 100190, and University of Chinese Academy of Sciences, Beijing 100049,  P.R. China}
\email{zouchangjun17@mails.ucas.ac.cn}

\thanks{This work was supported by NNSF of China Grant 11831009 and Chinese Academy of Sciences (No. QYZDJ-SSW-SYS021).}

\begin{abstract}
	We are concerned with the existence of periodic travelling-wave solutions for the generalized surface quasi-geostrophic (gSQG) equation(including incompressible Euler equation), known as von K\'arm\'an vortex street. These solutions are of $C^1$ type, and are obtained by studying a semilinear problem on an infinite strip whose width equals to the period. By a variational characterization of solutions, we also show the relationship between vortex size, travelling speed and street structure. In particular, the vortices with positive and negative intensity have the same or different scaling size in our construction, which constitutes the regularization for K\'arm\'an point vortex street.
\end{abstract}

\maketitle

{\small{\bf Keywords:} K\'arm\'an vortex street; the gSQG equation; $C^1$ type solutions; Lyapunov-Schmidlt reduction. }\\

\section{Introduction and main results}
When a two-dimensional bluff body is placed in a uniform stream moving at certain velocities, vortices with opposite intensity will arise along two parallel staggered rows, which is observed as water flow going through a pipe, or wind passing an obstacle. The best-known event caused by this pattern is the fall of Tacoma narrows bridge in 1940. Experimental study of periodic vortex shedding can be traced back to 1870s in \cite{Ray,Sto}, while the theoretical model was proposed by von K\'arm\'an \cite{Kar1,Kar2}, and hence this phenomenon is known as von K\'arm\'an vortex street nowadays in literatures. For the reason that the exact problem is complex from a theoretical point of view, some simplified models were investigated in \cite{Are,Lamb,Roh}. The main idea is using different kinds of solutions to approximate K\'arm\'an point vortex street, since the latter is the basic and simplest pattern of periodic vortex shedding.

It is notable that although viscosity and bluff body are involved in the generation of K\'arm\'an vortex street, they seem not to influence anymore the evolution of the vortex street(For more details on the effect of Reynold number and shape of bluff body, we refer to \cite{Gac2,Jim,Mat} and references therein). This fact indicates that an inviscid incompressible fluid model can be used to describe the vortex dynamics in K\'arm\'an vortex street. In \cite{Saf1,Saf2}, Saffman and Schatzman studied K\'arm\'an vortex street for Euler flow. Under the assumption that the support of each vortex has finite area, they conducted a series of numerical simulation to show the existence of one-directional periodic vortex shedding travelling at a constant speed. Moreover, they obtained a linear stability of K\'arm\'an vortex street where the size of vortices and street width satisfy a special condition.

We are going to study the existence of $C^1$ type K\'arm\'an vortex street for the generalized surface quasi-geostrophic (gSQG) equation, which can be written as follows
\begin{align}\label{1-1}
	\begin{cases}
		\partial_t\vartheta+\mathbf{v}\cdot \nabla \vartheta =0&\text{in}\ \mathbb{R}^2\times (0,T),\\
		\ \mathbf{v}=\nabla^\perp\psi, \ \psi=(-\Delta)^{-s}\vartheta     &\text{in}\ \mathbb{R}^2\times (0,T),\\
		\vartheta\big|_{t=0}=\vartheta_0 &\text{in}\ \mathbb{R}^2,\\
	\end{cases}
\end{align}
with $ 0<s\le1$, where $(x_1,x_2)^\perp=(x_2,-x_1)$, $\nabla^\perp=(\frac{\partial}{\partial x_2}, -\frac{\partial}{\partial x_1})$,  $\vartheta(x,t):\mathbb{R}^2\times (0,T)\to \mathbb{R}$ the active scalar being transported by the velocity field $\mathbf{v}( x,t):\mathbb{R}^2\times (0,T)\to \mathbb{R}^2$ generated by $\vartheta$, and $\psi$ the corresponding stream function. The operator $(-\Delta)^{-s}$ is defined by
\begin{equation*}
(-\Delta)^{-s}\vartheta(x)=\int_{\mathbb{R}^2}G_s(x- y)\vartheta(y)d y,
\end{equation*}
where $G_s$ is the fundamental solution of $(-\Delta)^{-s}$ in $\mathbb{R}^2$ given by
\begin{equation*}
	G_s(x)=\left\{
	\begin{array}{lll}
		\frac{1}{2\pi}\ln \frac{1}{|x|},  \ \ \ \ \ \ \ \ \ \ & \text{if} \ \ s=1,\\
		\frac{c_s}{|x|^{2-2s}}, \ \ \ c_s=\frac{\Gamma(1-s)}{2^{2s}\pi\Gamma(s)}, & \text{if} \ \ 0< s<1,
	\end{array}
	\right.
\end{equation*}
with $\Gamma$ the Euler gamma function.

When $s=1$, \eqref{1-1} is the vorticity formulation of 2D incompressible Euler equation. When $s=\frac{1}{2}$, \eqref{1-1} is the surface quasi-geostrophic (SQG) equation, which is relevant to the atmosphere circulation and ocean dynamics \cite{Con}. The gSQG model \eqref{1-1} with $0<s<1$ was proposed by C\'{o}rdoba et al. in \cite{Cor}, and was taken as a generalization of the Euler equation and the SQG equation.

In 1963, Yudovich \cite{Yud} proved the global well-posedness of \eqref{1-1} with the initial data in $L^1\cap L^\infty$ for $s=1$. However, the global well-posedness for the general case $0<s<1$ remains unknown due to the loss of regularity for velocity field. In \cite{Con}, Constantin et al.  established local well-posedness of the gSQG equation for classical solutions, which is known for sufficiently regular initial data by \cite{Chae,Gan,Kis2}. The study of local existence in different function spaces can be found in \cite{Chae0,Li0,Wu1,Wu2}. Resnick \cite{Res} proved global existence for weak solutions to the SQG equations with any initial data in $L^2$. This remarkable result was then improved by Marchand \cite{Mar} to any initial data belonging to $L^p$ with $p>4/3$. On the other hand, Kiselev and Nazarov \cite{Kis} constructed solutions of the gSQG equations with arbitrary Sobolev growth.

As concrete examples for the gSQG flow, various kinds of global solutions to \eqref{1-1} are constructed. There are mainly two kinds of global solutions: The rotating solutions and the travelling-wave solutions. The rotating solutions are also known as the V-states, and the first explicit non-trivial V-state is Kirchhoff ellipse given in \cite{Kir} for $s=1$. In the past decades, different methods were developed to construct solutions of this type, and we refer to \cite{Ao,Cas1,Has,HM,T2} for more discussion. As for the travelling-wave solutions, the early example is the Lamb dipole or Chaplygin-Lamb dipole \cite{Lamb}, which is a travelling vortex pair in the case $s=1$. In \cite{Ao,Cao4,Go0,Gra}, several kinds of travelling-wave solutions were given by a similar approach for rotating solutions. We shall bring the attention of readers to that K\'arm\'an vortex street is a special kind of travelling-wave solution other than vortex pairs, which consists of infinite vortices and has a periodic structure.
Furthermore, different from vortex pairs which can only travel along their axis of symmetry, the uniform travelling speed of a vortex street can be chosen in other directions by adjusting the phase difference of two sides of the street. We will show these properties later in our main theorem.

To explain the problem we are to address and state our results, we need to introduce notations for convenience: $\boldsymbol\delta_z$ is the Dirac measure located at $z\in \mathbb R^2$, $\boldsymbol\chi_\Omega$ denotes the characteristic function of  $\Omega\subset\mathbb R^2$, $\boldsymbol e_i$ is the unit vector of $x_i$ axis for $i=1,2$; $O_\varepsilon(1)$ will be used to denote quantities which stay bounded as $\varepsilon$ goes to zero and $o_\varepsilon(1)$ to denote quantities which go to zero as $\varepsilon$ goes to zero. $O_\varepsilon(1)$ and $o_\varepsilon(1)$ only depend on $\varepsilon$.

As a preliminary, we cast an eye on the most singular type of K\'arm\'an vortex street, where solutions to \eqref{1-1} are composed of two parallel rows of point vortices. If we denote
\begin{equation*}
	p=(-d,-a), \ \ \ \ \ q=(d,a)
\end{equation*}
with $d\ge0$ as half of street width and $a\in [0,l/2)$ as half of the phase(The case $d=a=0$ must be ruled out), then these $x_2$-directional periodic travelling-wave solutions take the form
\begin{equation*}
	\vartheta(x,t)=\sum\limits_{k\in \mathbb Z}\boldsymbol\delta_p(x+kl\boldsymbol e_2-t\mathbf U_{d,l,a})-\sum\limits_{k\in \mathbb Z}\boldsymbol\delta_q(x+kl\boldsymbol e_2-t\mathbf U_{d,l,a}),
\end{equation*}
where $l>0$ is the period length, and $\mathbf U_{d,l,a}\in \mathbb{R}^2$ is the uniform travelling speed. According to the dynamic formula for point vortex model given by Rosenzweig \cite{Ros}, the travelling speed $\mathbf U_{d,l,a}$ can be computed directly as
\begin{equation*}
	\mathbf U_{d,l,a}=-\mathcal C_s\lim\limits_{N\to\infty}\sum\limits_{|k|\le N }\frac{(p-q+kl\boldsymbol e_2)^\perp}{|p-q+kl\boldsymbol e_2|^{4-2s}},
\end{equation*}
where
\begin{equation}\label{1-2}
	\mathcal C_s=\left\{
	\begin{array}{lll}
		1/2\pi  \ \ \ \ \ \ \ \ \ \ & \text{if} \ \ s=1,\\
		(2-2s)c_s, & \text{if} \ \ 0<s<1.
	\end{array}
	\right.
\end{equation}
In particular, when $a=0$ or $l/4$ we can use symmetry of the solution with respect to $x_2=kl$ or $kl\pm l/4$ to derive that
\begin{equation*}
\mathbf U_{d,l,0}=W_1\boldsymbol e_2,  \ \ \ \ \ \mathbf U_{d,l,l/4}=W_2\boldsymbol e_2,
\end{equation*}
where
\begin{equation}\label{1-3}
	W_1(d)=\mathcal C_s\lim\limits_{N\to\infty}\sum\limits_{|k|\le N}\frac{2d}{(4d^2+k^2l^2)^{2-s}},
\end{equation}
and
\begin{equation}\label{1-4}
	W_2(d)=\mathcal C_s\lim\limits_{N\to\infty}\sum\limits_{|k|\le N}\frac{2d}{(4d^2+(kl+\frac{l}{2})^2)^{2-s}}.
\end{equation}

Recently, Garc\'ia \cite{Gac2} constructed a family of patch type solutions to approximate K\'arm\'an point vortex street with $1/2<s\le 1$. These solutions have the following explicit expression
\begin{equation*}
	\vartheta_\varepsilon(x)=\frac{1}{\varepsilon^2\pi}\sum\limits_{k\in \mathbb Z}\boldsymbol\chi_{D_\varepsilon+(-d,kl-a)}( x-t\mathbf U_\varepsilon)-\frac{1}{\varepsilon^2\pi}\sum\limits_{k\in \mathbb Z}\boldsymbol\chi_{-D_\varepsilon+(d,kl+a)}(x-t\mathbf U_\varepsilon),
\end{equation*}
where $D_\varepsilon$ is a perturbation of the disc $B_\varepsilon(0)$ centered at the origin with sufficiently small radius $\varepsilon$, and $\mathbf U_\varepsilon$ is the uniform travelling speed. The approach in \cite{Gac2} highly relies on the patch structure: Employing Biot-Savart law, the author obtained the contour dynamic equation for vortex boundary, and calculated its linearization at point vortex solutions. The key point of the construction is to choose $\mathbf U_\varepsilon=\mathbf U_{d,l,a}+o_\varepsilon(1)$ properly, so that an isomorphism condition is satisfied for linearized operator. Then a family of nontrivial solutions can be obtained by implicit function theorem.

In the present paper, we will focus on the construction of $C^1$ type K\'arm\'an vortex street. To be more precise, we will prove the existence of travelling-wave solutions to \eqref{1-1} with the formulation
\begin{equation}\label{1-5}
	\vartheta_\varepsilon(x,t)=\vartheta_{0,\varepsilon}(x-t\mathbf U_\varepsilon),
\end{equation}
where $\mathbf U_\varepsilon\in \mathbb R^2$ is the uniform travelling speed, $\varepsilon$ is some size parameter, and the initial data $\vartheta_{0, \varepsilon}(x)$ is given by
\begin{equation}\label{1-6}
	\vartheta_{0,\varepsilon}(x)=\sum\limits_{k\in \mathbb Z}\vartheta_{1,\varepsilon}(x+kl\boldsymbol e_2)-\sum\limits_{k\in \mathbb Z}\vartheta_{2,\varepsilon}(x+kl\boldsymbol e_2).
\end{equation}
Here, we assume that $\vartheta_{1,\varepsilon}(x),\vartheta_{2,\varepsilon}(x)\in C^1(\mathbb R^2)$ and satisfy
\begin{equation*}
supp(\vartheta_{1,\varepsilon})\subset B_{L\varepsilon}((-d,-a)), \ \ \ supp(\vartheta_{2,\varepsilon})\subset B_{L\sigma(\varepsilon)}((d,a)),
\end{equation*}
where $L>0$ is some large constant, $\sigma(\varepsilon)>0$ is the size function in the sense that $L\varepsilon $ and $L\sigma(\varepsilon)$ are the upper bounds  for the diameters of supports of vortices with positive vorticity and vortices with negative vorticity respectively. A novelty of our construction is that vortices on the right hand side may have a different size function compared with those on the left hand side, that is, we make the following assumption on $\sigma(\varepsilon)$:
\begin{itemize}
	\item[\textbf{(H)}] As $\varepsilon\to 0$, $\sigma(\varepsilon)/\varepsilon\le C$ for fixed $C>0$, and $\varepsilon^\tau/\sigma(\varepsilon)=o_\varepsilon(1)$ for some $1<\tau\le 2$.
\end{itemize}

There are several difficulties in the construction of $C^1$ type solutions mentioned above. Firstly, we do not impose any symmetry with respect to $x_2$-axis, and $\vartheta_{1,\varepsilon}(x)$, $\vartheta_{2,\varepsilon}(x)$ may have different profiles apart from the difference in vortex size. Secondly, due to the general $C^1$ type vorticity, the velocity of flow can not be recovered by vortex boundary alone, and the method by studying contour dynamic equation is invalid. To achieve our goal, we will take another approach, which is from a new angle of view but also reduces the construction into a finite-dimensional problem. We will briefly explain our strategy. For easy understanding, we first assume $a=0$, $\vartheta_{0,\varepsilon}$ is symmetric with respect to $x_2=kl$; or $a=l/4$, $\vartheta_{0,\varepsilon}$ is symmetric with respect to $x_2=kl\pm l/4$, so that the travelling speed is in $x_2$ direction and we can write $\mathbf U_\varepsilon=W_\varepsilon\boldsymbol e_2$ for some scalar $W_\varepsilon$.

According to \eqref{1-5}, by introducing the $x_2$-directional periodic stream function $\tilde\psi_\varepsilon$, \eqref{1-1} can be rewritten as
\begin{equation}\label{1-7}
	(\nabla^\perp \tilde\psi_\varepsilon-W_\varepsilon\boldsymbol e_2)\cdot \nabla \vartheta_{0,\varepsilon}(x)=0, \ \ \ \tilde\psi_\varepsilon=(-\Delta)^{-s}\vartheta_{0,\varepsilon} \ \ \ \ \ \text{in} \ \ \mathbb R^2,
\end{equation}
which means $\vartheta_{0,\varepsilon}$ is functional related to $\tilde\psi_\varepsilon+W_\varepsilon x_1$. It is natural to impose
$\vartheta_{0,\varepsilon}=f(\tilde\psi_\varepsilon+W_\varepsilon x_1)$ for some $C^1$ monotone $f$, and transform \eqref{1-7} into a semilinear elliptic equation
\begin{equation}\label{1-8}
	(-\Delta)^s \tilde\psi_\varepsilon=f(\tilde\psi_\varepsilon+W_\varepsilon x_1) \ \ \ \ \ \ \ \text{in} \ \ \mathbb R^2.
\end{equation}
One can easily verify that \eqref{1-8} provides a family of classical solutions to \eqref{1-7} by theory of regularity for elliptic equations. We will follow the framework in \cite{Ao, Cao5} to construct desired solutions to \eqref{1-8} by a Lyapunov-Schmidlt reduction.

However, there are several new ideas in our construction: since $\tilde\psi_\varepsilon$ and $\vartheta_{0,\varepsilon}$ are periodic over $\mathbb R^2$, the energy of K\'arm\'an vortex street is infinite, which leads to a difficulty for variational characterization of solutions. Inspired by \cite{Bar} on one-dimensional periodic problem, we will study \eqref{1-8} restricted in an infinite strip whose width equals the $x_2$ direction period $l$, namely
\begin{equation}\label{1-9}
	(-\Delta)_*^s \psi_\varepsilon=f(\psi_\varepsilon+Wx_1) \ \ \ \ \ \ \ \text{in} \ \ \mathbb R\times (-l/2,l/2),
\end{equation}
where $\psi_\varepsilon$ is $\tilde\psi_\varepsilon$ restricted in $\mathbb R\times (-l/2,l/2)$, and $(-\Delta)_*^s$ is  $(-\Delta)^s$ restricted in the corresponding typical period. We will give the explicit formula for $(-\Delta)_*^s$ in Section 2. Moreover, our construction needs much more careful estimate compared with \cite{Ao} due to the different sizes of positive and negative vortices.

When the solvability of \eqref{1-9} is considered, another problem arises from the fundamental solution of $(-\Delta)^s$: $G_s(x)$ is of order $|x|^{2s-2}$ when $0<s<1$, and $\ln (1/|x|)$ when $s=1$. But $2-2s$ is unsatisfactorily less than $1$ if $s>1/2$. This fact may cause the divergence for $L^\infty$ norm of $\psi_\varepsilon$ when we deal with the influence of infinite vortices. Thanks to the unique structure of K\'arm\'an vortex street, where each positive vortex matches a negative vortex with equal intensity, we observe that the influence of two equally distant vortex pairs is actually of order $|x|^{2s-4}$. As a result, $\psi_\varepsilon$ has a convergent $L^\infty$ norm, and our method does work as desired.

Having made the preparations, we are now in the position to state our first result.
\begin{theorem}\label{thm1}
	Suppose $s\in (0,1]$. Then there exist $\varepsilon_0>0$ and $\tau$ in \textbf{(H)} such that for any $\varepsilon\in (0,\varepsilon_0)$, \eqref{1-1} has a $x_2$-directional periodic travelling-wave solution $\vartheta_\varepsilon(x,t)=\vartheta_{0,\varepsilon}(x-tW_\varepsilon\boldsymbol e_2)$, where the initial data $\vartheta_{0,\varepsilon}(x)\in C^1(\mathbb{R}^2)$  is symmetric with respect to $x_2=kl$ for $k\in\mathbb Z$, $l>0$, and  has the form
	\begin{equation*}
		\vartheta_{0,\varepsilon}(x)=\sum\limits_{k\in \mathbb Z}\vartheta_{1,\varepsilon}(x+kl\boldsymbol e_2)-\sum\limits_{k\in \mathbb Z}\vartheta_{2,\varepsilon}(x+kl\boldsymbol e_2),	
	\end{equation*}
	with $l>0$, $supp(\vartheta_{1,\varepsilon})\subset B_{L\varepsilon}((-d,0))$, $supp(\vartheta_{2,\varepsilon})\subset B_{L\sigma(\varepsilon)}((d,0))$ for $\sigma(\varepsilon)$ satisfying \textbf{(H)}, $d>0$, and some large $L>0$. The scalar $W_\varepsilon$ satisfies
	\begin{equation*}
		W_\varepsilon=\mathcal C_s\lim\limits_{N\to\infty}\sum\limits_{|k|\le N}\frac{2d}{(4d^2+k^2l^2)^{2-s}}+o_\varepsilon(1),
	\end{equation*}
	where $\mathcal C_s$ is given in \eqref{1-2}. Moreover, it holds in the sense of measure
	\begin{equation*}
		\vartheta_{0,\varepsilon}(x)\rightharpoonup \sum\limits_{k\in \mathbb Z}\boldsymbol\delta_{(-d,kl)}(x)-\sum\limits_{k\in \mathbb Z}\boldsymbol\delta_{(d,kl)}(x) \ \ \ \text{as} \ \ \varepsilon\to 0,
	\end{equation*}
\end{theorem}
\begin{remark}
	When $0<s<1$, the function $W_1(d)$ given in \eqref{1-3} is monotonically decreasing whose range is $(0,+\infty)$. As a result, $W_\varepsilon$ can take any positive values by adjusting $d$. When $s=1$, there is an explicit formula $W_1(d)=\frac{1}{2l}\coth (\frac{\pi d}{2l})$. Since the range of $\coth$ on $\mathbb R_+$ is $(1,\infty)$, we deduce that $W_\varepsilon>1/4l$ provided $\varepsilon$ is sufficiently small.
\end{remark}

From \eqref{1-3}, we see that Theorem \ref{thm1} corresponds to the K\'arm\'an point vortex street for $a=0$ when we let $\varepsilon\to 0$. As a counterpart of \eqref{1-4}, the result for the case $a=l/4$ can be stated as follows.
\begin{theorem}\label{thm2}
	Suppose $s\in (0,1]$. Then there exist $\varepsilon_0>0$ and $\tau$ in \textbf{(H)} such that for any $\varepsilon\in (0,\varepsilon_0)$, \eqref{1-1} has a $x_2$-directional periodic travelling-wave solution $\vartheta_\varepsilon(x,t)=\vartheta_{0,\varepsilon}(x-tW_\varepsilon\boldsymbol e_2)$, where the initial data $\vartheta_{0,\varepsilon}(x)\in C^1(\mathbb{R}^2)$ is symmetric with respect to $x_2=kl\pm l/4$ for $k\in\mathbb Z$, $l>0$, and has the form
	\begin{equation*}
		\vartheta_{0,\varepsilon}(x)=\sum\limits_{k\in \mathbb Z}\vartheta_{1,\varepsilon}(x+kl\boldsymbol e_2)-\sum\limits_{k\in \mathbb Z}\vartheta_{2,\varepsilon}(x+kl\boldsymbol e_2).	
	\end{equation*}
	with $supp(\vartheta_{1,\varepsilon})\subset B_{L\varepsilon}((-d,-l/4))$, $supp(\vartheta_{2,\varepsilon})\subset B_{L\sigma(\varepsilon)}((d,l/4))$ for $\sigma(\varepsilon)$ satisfying \textbf{(H)}, $d\ge 0$, and some large $L>0$. The scalar $W_\varepsilon$ satisfies
	\begin{equation*}
		W_\varepsilon=\mathcal C_s\lim\limits_{N\to\infty}\sum\limits_{|k|\le N}\frac{2d}{(4{d}^2+(kl+\frac{l}{2})^2)^{2-s}}+o_\varepsilon(1)
	\end{equation*}
	with $\mathcal C_s$ given in \eqref{1-2}. Moreover, it holds in the sense of measure
	\begin{equation*}
		\vartheta_{0,\varepsilon}(x)\rightharpoonup \sum\limits_{k\in \mathbb Z}\boldsymbol\delta_{(-d,kl-l/4)}(x)-\sum\limits_{k\in \mathbb Z}\boldsymbol\delta_{(d,kl+l/4)}(x) \ \ \ \text{as} \ \ \varepsilon\to 0.
	\end{equation*}
\end{theorem}
\begin{remark}
	When $0<s<1$, $W_2(d)$ in \eqref{1-4} will first increase and then decrease to $0$ on $\mathbb R_+$. Hence we have $W_\varepsilon<\sup_{\mathbb{R^+}}W_2+1$ if $\varepsilon$ is sufficiently small. While for $s=1$, it holds $W_1(d)=\frac{1}{2l}\tanh (\frac{\pi d}{2l})$. Since the range of $\tanh$ on $\mathbb R_+\cup\{0\}$ is $[0,1)$, we deduce that $W_\varepsilon<1/l$ as long as $\varepsilon$ is sufficiently small.
	
	Notice that $d$ can be $0$ in Theorem \ref{thm2}. In this special case, the vortex street is located along $x_2$-axis and nearly stagnating, namely, travelling speed is almost zero. In paticular, if we assume $\vartheta_{0,\varepsilon}$ is even in $x_1$-direction, then the solution $\vartheta_\varepsilon$ is stationary, which gives another example for nontrivial stationary solution to \eqref{1-1} with $0<s\le 1$ besides the one constructed in \cite{Gom}.
\end{remark}

More generally, we have the following result for arbitrary phase $a\in (0,l/2)$, where the uniform travelling speed $\mathbf U_\varepsilon$ can have different directions other than $x_2$-direction.
\begin{theorem}\label{thm3}
	Suppose $s\in (0,1]$, $p=(-d,-a)$ and $q=(d,a)$ with $d\ge0$, $a\in(0,l/2)$.  Then there exist $\varepsilon_0>0$ and $\tau$ in \textbf{(H)} such that for any $\varepsilon\in (0,\varepsilon_0)$, \eqref{1-1} has a $x_2$-directional periodic travelling-wave solution $\vartheta_\varepsilon(x,t)=\vartheta_{0,\varepsilon}(x-t\mathbf U_\varepsilon)$, where the initial data $\vartheta_{0,\varepsilon}(x)\in C^1(\mathbb{R}^2)$ has the form
	\begin{equation*}
		\vartheta_{0,\varepsilon}(x)=\sum\limits_{k\in \mathbb Z}\vartheta_{1,\varepsilon}(x+kl\boldsymbol e_2)-\sum\limits_{k\in \mathbb Z}\vartheta_{2,\varepsilon}(x+kl\boldsymbol e_2).	
	\end{equation*}
	with $l>0$, $supp(\vartheta_{1,\varepsilon})\subset B_{L\varepsilon}(p)$, $supp(\vartheta_{2,\varepsilon})\subset B_{L\sigma(\varepsilon)}(q)$ for $\sigma(\varepsilon)$ satisfying \textbf{(H)} and some large $L>0$. The uniform travelling speed $\mathbf U_\varepsilon\in \mathbb R^2$ satisfies
	\begin{equation*}
		\mathbf U_\varepsilon=-\mathcal C_s\lim\limits_{N\to\infty}\sum\limits_{|k|\le N }\frac{(p-q+kl\boldsymbol e_2)^\perp}{|p-q+kl\boldsymbol e_2|^{4-2s}}+o_\varepsilon(1)
	\end{equation*}
	with $\mathcal C_s$ given in \eqref{1-2}. Moreover, it holds in the sense of measure
	\begin{equation*}
		\vartheta_{0,\varepsilon}(x)\rightharpoonup \sum\limits_{k\in \mathbb Z}\boldsymbol\delta_{p}(x+kl\boldsymbol e_2)-\sum\limits_{k\in \mathbb Z}\boldsymbol\delta_{q}(x+kl\boldsymbol e_2) \ \ \ \text{as} \ \ \varepsilon\to 0.
	\end{equation*}
\end{theorem}

The solutions constructed above actually constitute the regularization for K\'arm\'an point vortex street. Recall that a vortex dynamic system is called a vortex-wave system, if it is composed of highly concentrated vortices known as ``vortex", and relatively scattered vortices known as ``wave". Suppose $\sigma(\varepsilon)$ satisfies \textbf{(H)} with $\sigma(\varepsilon)=o_\varepsilon(1)$. Then compared with the positive vortices, the size of negative vortices in Theorem \ref{thm1} \ref{thm2} and \ref{thm3} has a sharper shrinking rate. So, in this situation, our result can be regarded as the regularization of foresaid vortex-wave system with ``vortex" on the right and ``wave" on the left.

Our proof will begin with the relatively simple case $a=0$ or $l/4$. In Section 2, we consider the gSQG equation with $0<s<1$. We first use the periodic setting to construct a series of approximate solutions to \eqref{1-6}, and compute the error of approximation. Then we study the linear projective problem and make essential a priori estimate. The existence and uniqueness of solutions to the projective problem can be obtained from contraction mapping theorem. We finish the construction by solving a reduced finite-dimensional problem. In Section 3, we use a similar method to deal with the Euler equation(the case with $s=1$), and complete the proof of Theorem \ref{thm1} and \ref{thm2}. To conclude the paper, we will investigate the general situation $a\in (0,l/2)$ and prove Theorem \ref{thm3} in Section 4.

\section{Construction for the gSQG equation with $0<s<1$}

In this section we consider the gSQG equation with $0<s<1$, and give proofs for Theorem \ref{thm1} and  \ref{thm2} for this case.

\subsection{Approximate solutions}
To regularize K\'arm\'an point vortex street, we are going to construct a family of solutions $\vartheta_{0,\varepsilon}$ to \eqref{1-6} such that in the sense of measure
\begin{equation*}
	\vartheta_{0,\varepsilon}(x)\rightharpoonup \sum\limits_{k\in \mathbb Z}\boldsymbol\delta_p(x+kl\boldsymbol e_2)-\sum\limits_{k\in \mathbb Z}\boldsymbol\delta_q(x+kl\boldsymbol e_2) \ \ \ \text{as} \ \ \varepsilon\to 0.
\end{equation*}
 We say a function is $l$-periodic, if it takes $l$ as a period in $x_2$ direction. To ensure that the energy of solution is finite, we are to consider the problem in some typical period. For this purpose, we denote $(-\Delta)_*^s$ as $(-\Delta)^s$ acting on $l$-periodic functions and restricted in the typical infinite strip domain  $\mathbb R\times (-l/2,l/2)$ corresponding to the period, which is given by the explicit formula
\begin{equation}\label{2-1}
	(-\Delta)_*^{s}\psi(x)=\int_{\mathbb R\times (-l/2,l/2)}J_s(x-z)\left(\psi(x)-\psi(z)\right)dz,
\end{equation}
where
\begin{equation*}
	J_s(x)=\sum\limits_{k\in \mathbb Z}\frac{C_s}{|x+kl\boldsymbol e_2 |^{2+2s}}, \ \ \ \ \ C_s=\frac{2^{2s}\Gamma(1-s)}{\pi|\Gamma(-s)|},
\end{equation*}
and $\psi(x)$ is some $l$-periodic restricted in $\mathbb R\times (-l/2,l/2)$. We can also denote the inverse of $(-\Delta)_*^s$ as $(-\Delta)_*^{-s}$ with the integral representation
\begin{equation}\label{2-2}
	(-\Delta)_*^{-s}\vartheta(x)=\int_{\mathbb R\times (-l/2,l/2)}K_s(x-z)\vartheta(x)dz, \ \ \ K_s(x)=\sum\limits_{k\in \mathbb Z}G_s(x+kl\boldsymbol e_2),
\end{equation}
for scalar function $\vartheta(x)$ with $supp(\vartheta(x))\subset \mathbb R\times (-l/2,l/2)$.

By the deduction in Section 1, we will consider the following semilinear elliptic problem
\begin{equation}\label{2-3}
	\begin{cases}
	(-\Delta)_*^s\psi=\varepsilon^{(2-2s)\gamma_1-2}(\psi+W_\varepsilon x_1-\varepsilon^{2s-2}\lambda_+)_+^{\gamma_1}\chi_{B_r(p)}\\
	 \,\,\, \ \ \ \ \ \ \ \ \ \ \ \ -\sigma(\varepsilon)^{(2-2s)\gamma_2-2}(-\psi-W_\varepsilon x_1-\sigma(\varepsilon)^{2s-2}\lambda_-)_+^{\gamma_2}\chi_{B_r(q)} \ \ \ \ \ \text{in} \ \ \mathbb R\times (-l/2,l/2),\\
	\psi(x)\to 0 \ \ \ \text{as} \ \  |x_1|\to \infty,
	\end{cases}
\end{equation}
where $\lambda_+$ and $\lambda_-$ are undetermined parameters and will be suitably chosen, $W_\varepsilon$ is the travelling speed of K\'arm\'an vortex street determined by location of $p$, $q$ and $\varepsilon$, $1<\gamma_1,\gamma_2<\frac{2+2s}{2-2s}$ ($\gamma_1,\gamma_2\neq\frac{1}{1-s}$), $\sigma(\varepsilon)$ satisfies assumption \textbf{(H)} with $\tau=\min\{\gamma_2,2\}$, and $r>0$ is a small constant such that $B_r(p)$ and $B_r(q)$ are disjoint. Moreover, we assume
\begin{equation*}
p=(-d, -a), \ \ \  q=(d, a),
\end{equation*}
where $d>0$ for $a=0$; or $d\ge 0$ for $a=l/4$.

Before giving the approximate solutions to \eqref{2-3}, we introduce the following fractional plasma problem, which can be regarded as the limit problem locally.
\begin{equation}\label{2-4}
	\begin{cases}
		(-\Delta)^su=(u-1)^\gamma_+ \ \ \ \ \ \text{in} \ \ \mathbb{R}^2,\\
		u(x)\to 0 \ \ \ \text{as} \ \  |x|\to \infty,
	\end{cases}
\end{equation}
where $0<s<1$ and $1<\gamma<\frac{2+2s}{2-2s}$. In view of \cite{Chan}, \eqref{2-4} has a unique radial solution $U(x)$ known as the ground state with following asymptotic behavior
\begin{equation*}
	\lim\limits_{|x|\to\infty}U(x)=c_sM_\gamma|x|^{-2+2s}, \ \ \ \lim\limits_{|x|\to\infty}U'(|x|)=-\mathcal C_sM_\gamma |x|^{-3+2s},
\end{equation*}
where $M_\gamma=\int_{\mathbb R^2}(U-1)_+^\gamma dx>0$.

Let radial functions $U_1(x)$, $U_2(x)$ be the ground states of \eqref{2-4} with exponent $\gamma=\gamma_1$ and $\gamma=\gamma_2$ respectively. A suitable approximate solution to \eqref{2-3} is
\begin{equation}\label{2-5}
	\Psi_\varepsilon(x)=\varepsilon^{2s-2}\sum\limits_{k\in \mathbb Z}\mu_+^{-\frac{2s}{\gamma_1-1}}U_1\left(\frac{x-p+kl\boldsymbol e_2}{\varepsilon\mu_+}\right)-\sigma(\varepsilon)^{2s-2}\sum\limits_{k\in \mathbb Z}\mu_-^{-\frac{2s}{\gamma_2-1}}U_2\left(\frac{x-q+kl\boldsymbol e_2}{\sigma(\varepsilon)\mu_-}\right),
\end{equation}
where $\mu_+$, $\mu_-$ are positive parameters to be chosen later. To make \eqref{2-5} convergent for every $x\in \mathbb R\times (-l/2,l/2)$, the above sum is understood in the sense
\begin{equation*}
	\begin{split}
		\Psi_\varepsilon&(x)=\varepsilon^{2s-2}\mu_+^{-\frac{2s}{\gamma_1-1}}U_1\left(\frac{x-p}{\varepsilon\mu_+}\right)-\sigma^{2s-2}\mu_-^{-\frac{2s}{\gamma_2-1}}U_2\left(\frac{x-q}{\sigma\mu_-}\right)\\
		&+\lim\limits_{N\to\infty}\sum\limits_{k=1}^N\left(\varepsilon^{2s-2}\sum\limits_{m=\pm k}\mu_+^{-\frac{2s}{\gamma_1-1}}U_1\left(\frac{x-p+ml\boldsymbol e_2}{\varepsilon\mu_+}\right)-\sigma^{2s-2}\sum\limits_{m=\pm k}\mu_-^{-\frac{2s}{\gamma_2-1}}U_2\left(\frac{x-q+ml\boldsymbol e_2}{\sigma\mu_-}\right)\right).
	\end{split}
\end{equation*}
Since as $\varepsilon\to 0$, in the sense of measure,
\begin{equation*}
	(-\Delta)_*^s\Psi_\varepsilon(x)\rightharpoonup M_{\gamma_1}\mu_+^{2-\frac{2s\gamma_1}{\gamma_1-1}}\boldsymbol\delta_p(x)-M_{\gamma_2}\mu_-^{2-\frac{2s\gamma_2}{\gamma_2-1}}\boldsymbol\delta_q(x),
\end{equation*}
we require that $\mu_+$ and $\mu_-$ satisfy
\begin{equation}\label{2-6}
	M_{\gamma_1}\mu_+^{2-\frac{2s\gamma_1}{\gamma_1-1}}=1, \ \ \ M_{\gamma_2}\mu_-^{2-\frac{2s\gamma_2}{\gamma_2-1}}=1,
\end{equation}
which can always be achieved since $\gamma_1,\gamma_2\neq\frac{1}{1-s}$. For simplicity, we will denote
\begin{equation*}
	U_{1,\varepsilon}(x)=\mu_+^{-\frac{2s}{\gamma_1-1}}U_1\left(\frac{x-p}{\varepsilon\mu_+}\right), \ \ \ U_{2,\varepsilon}(x)=\mu_-^{-\frac{2s}{\gamma_2-1}}U_2\left(\frac{x-q}{\sigma(\varepsilon)\mu_-}\right).
\end{equation*}
Then by direct computation, for $x\in B_r(p)$ we have
\begin{equation*}
	\begin{split}
		&(-\Delta)_*^s\Psi_\varepsilon-\varepsilon^{(2-2s)\gamma_1-2}(\Psi_\varepsilon+W_\varepsilon x_1-\varepsilon^{2s-2}\lambda_+)_+^{\gamma_1}\chi_{B_r(p)}\\
		& \ \ \ \ \ \ \ \ \ \ \ \ +\sigma^{(2-2s)\gamma_2-2}(-\Psi_\varepsilon-W_\varepsilon x_1-\sigma^{2s-2}\lambda_-)_+^{\gamma_2}\chi_{B_r(q)}\\
		&=\varepsilon^{-2}\bigg( \left(U_{1,\varepsilon}(x)-\mu_+^{-\frac{2s}{\gamma_1-1}}\right)_+^{\gamma_1}\\
		& \ \ \ \ \ \ \ -\left(\sum\limits_{k\in\mathbb Z}U_{1,\varepsilon}(x+kl\boldsymbol e_2)-\frac{\varepsilon^{2-2s}}{\sigma^{2-2s}}\sum\limits_{k\in\mathbb Z}U_{2,\varepsilon}(x+kl\boldsymbol e_2)+W_\varepsilon\varepsilon^{2-2s}x_1-\lambda_+\right)_+^{\gamma_1}\bigg).
	\end{split}
\end{equation*}
Similarly, for $x\in B_r(q)$ it holds
\begin{equation*}
	\begin{split}
		&(-\Delta)_*^s\Psi_\varepsilon-\varepsilon^{(2-2s)\gamma_1-2}(\Psi_\varepsilon+W_\varepsilon x_1-\varepsilon^{2s-2}\lambda_+)_+^{\gamma_1}\chi_{B_r(p)}\\
		& \ \ \ \ \ \ \ \ \ \ \ \ +\sigma^{(2-2s)\gamma_2-2}(-\Psi_\varepsilon-W_\varepsilon x_1-\sigma^{2s-2}\lambda_-)_+^{\gamma_2}\chi_{B_r(q)}\\
		&=\sigma^{-2}\bigg(-\left(U_{2,\varepsilon}(x)-\mu_-^{-\frac{2s}{\gamma_2-1}}\right)_+^{\gamma_2} \\
		& \ \ \ \ \ \ \ +\left(\sum\limits_{k\in\mathbb Z}U_{2,\varepsilon}(x+kl\boldsymbol e_2)-\frac{\sigma^{2-2s}}{\varepsilon^{2-2s}}\sum\limits_{k\in\mathbb Z}U_{1,\varepsilon}(x+kl\boldsymbol e_2)-W_\varepsilon\sigma^{2-2s}x_1-\lambda_-\right)_+^{\gamma_2}\bigg).
	\end{split}
\end{equation*}
To ensure that $\Psi_\varepsilon(x)$ is a good approximation to the solution to \eqref{2-3}, we choose $\lambda_+$ and $\lambda_-$ in such a way that
\begin{equation}\label{2-7}
	\begin{split}
		\lim\limits_{N\to\infty}\sum\limits_{k=1}^N\bigg(\sum\limits_{m=\pm k}U_{1,\varepsilon}(p+ml\boldsymbol e_2)&-\frac{\varepsilon^{2-2s}}{\sigma^{2-2s}}\sum\limits_{m=\pm k}U_{2,\varepsilon}(p+ml\boldsymbol e_2)\bigg)\\
		&-\frac{\varepsilon^{2-2s}}{\sigma^{2-2s}}U_{2,\varepsilon}(p)-W_\varepsilon\varepsilon^{2-2s}d-\lambda_+=-\mu_+^{-\frac{2s}{\gamma_2-1}}.
	\end{split}
\end{equation}
\begin{equation}\label{2-8}
	\begin{split}
	    \lim\limits_{N\to\infty}\sum\limits_{k=1}^N\bigg(\sum\limits_{m=\pm k}U_{2,\varepsilon}(q+ml\boldsymbol e_2)&-\frac{\sigma^{2-2s}}{\varepsilon^{2-2s}}\sum\limits_{m=\pm k}U_{1,\varepsilon}(q+ml\boldsymbol e_2)\bigg)\\
	    &-\frac{\sigma^{2-2s}}{\varepsilon^{2-2s}}U_{1,\varepsilon}(q)-W_\varepsilon\sigma^{2-2s}d-\lambda_-=-\mu_-^{-\frac{2s}{\gamma_2-1}}.
	\end{split}
\end{equation}
The sums in the above two equalities are convergent, since we have as $|kl|\to \infty$
\begin{equation*}
\sigma^{2-2s}\sum\limits_{m=\pm k}U_{1,\varepsilon}(p+ml\boldsymbol e_2)-\varepsilon^{2-2s}\sum\limits_{m=\pm k}U_{2,\varepsilon}(p+ml\boldsymbol e_2)\thickapprox C|kl|^{2s-4},
\end{equation*}
and
\begin{equation*}
\varepsilon^{2-2s}\sum\limits_{m=\pm k}U_{2,\varepsilon}(q+ml\boldsymbol e_2)-\sigma^{2-2s}\sum\limits_{m=\pm k}U_{1,\varepsilon}(q+ml\boldsymbol e_2)\thickapprox C|kl|^{2s-4}.
\end{equation*}
 Hence $\lambda_+$ and $\lambda_-$ have the following asymptotic behavior
\begin{equation*}
	\lambda_+=\mu_+^{-\frac{2s}{\gamma_1-1}}+O(\varepsilon^{2-2s} ), \ \ \  \lambda_-=\mu_-^{-\frac{2s}{\gamma_2-1}}+O(\sigma^{2-2s}),
\end{equation*}
and the error of the approximation by $\Psi_\varepsilon$ is
\begin{equation}\label{2-9}
	\begin{split}
		&(-\Delta)_*^s\Psi_\varepsilon-\varepsilon^{(2-2s)\gamma_1-2}(\Psi_\varepsilon+W_\varepsilon x_1-\varepsilon^{2s-2}\lambda_+)_+^{\gamma_1}\chi_{B_r(p)}\\
		& \ \ \ \ \ \ \ +\sigma^{(2-2s)\gamma_2-2}(-\Psi_\varepsilon-W_\varepsilon x_1-\sigma^{2s-2}\lambda_-)_+^{\gamma_2}\chi_{B_r(q)}\\
		&=O(\varepsilon^{1-2s})\chi_{B_{L\varepsilon}(p)}+O(\sigma^{1-2s})\chi_{B_{L\sigma}(q)},
	\end{split}
\end{equation}
where $L>0$ is some large constant.

Notice that we are to construct $\vartheta_{0,\varepsilon}(x)$ which is symmetric with respect to $x_2=kl$ when $a=0$, or to $x_2=kl\pm l/4$ when $a=l/4$. For further use, we denote this symmetry restricted in the typical strip $\mathbb R\times (-l/2,l/2)$ as $l$-symmetry. We will focus on $l$-symmetric solutions to \eqref{2-3}, which are small perturbations around $\Psi_\varepsilon(x)$ and can be written as
\begin{equation*}
	\psi_\varepsilon(x)=\Psi_\varepsilon(x)+\omega_\varepsilon(x),\,\,\,x\in \mathbb{R}\times (-l/2,l/2),
\end{equation*}
where $\omega_\varepsilon(x)$ is a family of $l$-symmetric perturbation terms. Actually, by this decomposition we can transform \eqref{2-3} into a equation for $\omega_\varepsilon(x)$, and we will discuss this issue in the rest of this section.

\subsection{The linear theory}
To find suitable $\omega_\varepsilon(x)$ such that $\psi_\varepsilon(x)$ are solutions to \eqref{2-3}, it is necessary to study the linearized operator of \eqref{2-3} at $\Psi_\varepsilon(x)$, which is given by
\begin{equation}\label{3-1}
	\mathbb L_\varepsilon w=(-\Delta)_*^s w-f_u(x,\Psi_\varepsilon) w \ \ \ \ \  \text{in} \ \ \mathbb R\times (-l/2,l/2),
\end{equation}
where and hereafter in this section $f(x,u)$ is the (nonlinear) function in the left hand side of \eqref{2-3}, that is
\[
\begin{split}
f(x,u)&=\varepsilon^{(2-2s)\gamma_1-2}(u+W_\varepsilon x_1-\varepsilon^{2s-2}\lambda_+)_+^{\gamma_1}\chi_{B_r(p)}\\
&\,\,\,\,\,\,-\sigma(\varepsilon)^{(2-2s)\gamma_2-2}(-u-W_\varepsilon x_1-\sigma(\varepsilon)^{2s-2}\lambda_-)_+^{\gamma_2}\chi_{B_r(q)},
\end{split}
\]
so its Fr\'echet derivative at $\Psi_\varepsilon$ is
\begin{equation}\label{3-2}
	\begin{split}
		f_u(x,\Psi_\varepsilon)&=\varepsilon^{(2-2s)\gamma_1-2}\gamma_1(\Psi_\varepsilon+W_\varepsilon x_1-\varepsilon^{2s-2}\lambda_+)_+^{\gamma_1-1}\chi_{B_r(p)}\\
        &\,\,\,\,\,\,+\sigma(\varepsilon)^{(2-2s)\gamma_2-2}\gamma_2(-\Psi_\varepsilon-W_\varepsilon x_1-\sigma(\varepsilon)^{2s-2}\lambda_-)_+^{\gamma_2-1}\chi_{B_r(q)}.
	\end{split}
\end{equation}
Therefore we obtain the following equation for $\omega$ which is equivalent to \eqref{2-3}
\begin{equation}\label{3-3}
	\mathbb L_\varepsilon\omega_\varepsilon=-E_\varepsilon+R_\varepsilon(\omega_\varepsilon) \ \ \ \ \ \ \  \text{in} \ \ \mathbb R\times (-l/2,l/2),
\end{equation}
where
\begin{equation*}
	E_\varepsilon=(-\Delta)_*^s \Psi_\varepsilon-f(x,\Psi_\varepsilon)
\end{equation*}
and
\begin{equation*}
	R_\varepsilon(\omega_\varepsilon)=f(x,\Psi_\varepsilon+\omega_\varepsilon)-f(x,\Psi_\varepsilon)-f_u(x,\Psi_\varepsilon)\omega_\varepsilon.
\end{equation*}

Let $U(x)$ be the ground state solution to the fraction plasma problem \eqref{2-4}, and
\begin{equation*}
	\mathbb L_0 w=(-\Delta)^s w-\gamma(U-1)_+^{\gamma-1}w \ \ \ \ \  \text{in} \ \ \mathbb R^2
\end{equation*}
be the linearization at $U(x)$. To carry out the process of Lyapunov-Schmidlt reduction, we need  the following result of nondegeneracy for the above limiting problem, which is given in \cite{Ao}:
\begin{theorem}\label{thm3-1}
	If $\varphi$ is in the kernel of $\mathbb L_0$ and $\varphi(x)\to 0$ as $|x|\to\infty$, then $\varphi$ is a linear combination of $\frac{\partial U}{\partial x_1}$ and $\frac{\partial U}{\partial x_2}$.
\end{theorem}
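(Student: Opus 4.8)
The plan is to reduce the kernel equation $\mathbb L_0\varphi=0$ to a one–dimensional problem in each angular mode, using the rotational invariance of $(-\Delta)^s$ together with the radial symmetry of $U$ and of the potential $V(x):=\gamma(U-1)_+^{\gamma-1}$. First I would record the decay of any admissible $\varphi$: since $V$ is bounded with compact support and $\varphi\to0$ at infinity, the relation $(-\Delta)^s\varphi=V\varphi$ has a right–hand side in $L^\infty$ with compact support, so by the representation formula together with the asymptotics of $G_s$ one gets $\varphi(x)=O(|x|^{2s-2})$ (faster if $\int V\varphi=0$), and in particular $\varphi\in\dot H^s(\mathbb R^2)$ with finite energy $\int|(-\Delta)^{s/2}\varphi|^2=\int V\varphi^2<\infty$. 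This legitimizes the angular decomposition and the integrations by parts below.

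Next, expand $\varphi(r,\theta)=\sum_{k\in\mathbb Z}\varphi_k(r)e^{ik\theta}$. Because $(-\Delta)^s$ commutes with rotations and $V$ is radial, $\mathbb L_0$ acts mode by mode, so each $\varphi_k$ solves $\mathbb L_0^{(k)}\varphi_k=0$, where $\mathbb L_0^{(k)}$ is the radial operator with quadratic form $Q_k(\phi)=\|(-\Delta)^{s/2}(\phi\,e^{ik\theta})\|_{L^2}^2-\int_{\mathbb R^2}V|\phi|^2$. The angular ($k$–dependent) part of the fractional energy is strictly increasing in $|k|$ — for the local operator this is the centrifugal term $k^2/r^2$, and for $(-\Delta)^s$ it follows from the Caffarelli--Silvestre extension or the heat–semigroup subordination formula — while the potential term is independent of $k$; hence $Q_k>Q_1$ on nonzero functions for $|k|\ge2$. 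Now $\partial_{x_1}U=U'(r)\cos\theta$ and $\partial_{x_2}U=U'(r)\sin\theta$, obtained by differentiating $(-\Delta)^sU=(U-1)_+^\gamma$, lie in $\ker\mathbb L_0$ and sit in the modes $k=\pm1$; thus $U'$ solves $\mathbb L_0^{(1)}U'=0$. Since the ground state $U$ is radially decreasing (its radial derivative has constant sign, by the structure result of \cite{Chan}), $U'$ is a sign–definite eigenfunction of $\mathbb L_0^{(1)}$, so by the Perron--Frobenius property of the bottom eigenfunction of such fractional Schr\"odinger operators, $0$ is the infimum of the spectrum of $\mathbb L_0^{(1)}$ and $\ker\mathbb L_0^{(1)}=\mathrm{span}\{U'\}$. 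Consequently $\mathbb L_0^{(1)}\ge0$, whence $\mathbb L_0^{(k)}>0$ and $\varphi_k\equiv0$ for every $|k|\ge2$, while $\varphi_{\pm1}$ is a multiple of $U'$; that is, the modes $|k|\ge1$ of $\varphi$ contribute exactly a linear combination of $\partial_{x_1}U$ and $\partial_{x_2}U$.

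It remains to rule out the radial mode $k=0$, and this is the genuine obstacle: here $Q_0<Q_1$ yields no sign information (indeed, unlike $\mathbb L_0^{(k)}$ for $k\ne0$, the operator $\mathbb L_0^{(0)}$ need not be nonnegative), so a quantitative input is needed. My plan is to use the only scaling compatible with the plasma nonlinearity: $U_\kappa(x):=\kappa\,U\big(\kappa^{(\gamma-1)/2s}x\big)$ solves $(-\Delta)^su=(u-\kappa)_+^\gamma$, so differentiating at $\kappa=1$ the radial function $Z:=U+\tfrac{\gamma-1}{2s}\,x\cdot\nabla U$ satisfies $\mathbb L_0^{(0)}Z=-\gamma(U-1)_+^{\gamma-1}$, a function of fixed sign supported in the plasma region; since also $\mathbb L_0^{(0)}U=-(U-1)_+^{\gamma-1}\big(1+(\gamma-1)U\big)$, pairing a putative radial kernel element $\varphi_0$ with $Z$ and with $U$ (using the symmetric finite–energy form) yields the orthogonality relations $\int(U-1)_+^{\gamma-1}\varphi_0=\int(U-1)_+^{\gamma}\varphi_0=0$. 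Combining these with a nodal/oscillation analysis of $\mathbb L_0^{(0)}$ — carried out most transparently after the Caffarelli--Silvestre extension, where $\mathbb L_0^{(0)}$ becomes a degenerate–elliptic eigenvalue problem in $(r,y)$ amenable to Sturm-type comparison and a count of nodal domains — forces $\varphi_0\equiv0$; alternatively, this radial nondegeneracy is exactly the content of \cite{Ao} (resting on the uniqueness of the ground state from \cite{Chan}) and may be quoted directly. Summing the three cases gives $\varphi\in\mathrm{span}\{\partial_{x_1}U,\partial_{x_2}U\}$. The hard part is precisely this last step: controlling the sign and oscillation of a nonlocal Schr\"odinger operator in the radial class is delicate, and it is where the variational/uniqueness characterization of $U$ and the broken scale invariance of the plasma equation must be combined.
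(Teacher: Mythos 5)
First, a point of context: the paper does not prove this theorem at all --- it is quoted from the reference \cite{Ao} (``we recall the nondegeneracy for this limiting problem, which is given in \cite{Ao}''), so there is no in-paper argument to compare against. Your outline does reconstruct the strategy of the standard proof (going back to Frank--Lenzmann--Silvestre and followed in \cite{Ao}): spherical-harmonic decomposition, monotonicity of the sector quadratic forms $Q_k$ in $|k|$, and a Perron--Frobenius argument in the sector $k=\pm1$ using that $U'$ is a sign-definite zero mode there. Those steps are correct in outline, though each rests on a nontrivial lemma you only gesture at (positivity improvement of the fractional heat semigroup restricted to a fixed angular sector, and the ordering $Q_k\ge Q_{k'}$ for $|k|\ge |k'|$ with strict inequality on nonzero functions). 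Your computation that $Z=U+\tfrac{\gamma-1}{2s}\,x\cdot\nabla U$ satisfies $\mathbb L_0 Z=-\gamma(U-1)_+^{\gamma-1}$, and the resulting orthogonality relations $\int(U-1)_+^{\gamma-1}\varphi_0=\int(U-1)_+^{\gamma}\varphi_0=0$, are also correct.

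The genuine gap is exactly where you flag it: the radial mode. The two orthogonality relations say only that $\varphi_0$ is orthogonal to two particular functions; they do not force $\varphi_0\equiv 0$, and the proposed ``nodal/oscillation analysis \ldots amenable to Sturm-type comparison'' is not a step one can take for granted. For a nonlocal Schr\"odinger operator there is no two-dimensional solution space in the radial class, no Wronskian, and Sturm/Courant nodal arguments fail in general; making an oscillation argument rigorous in the Caffarelli--Silvestre extension is precisely the technical heart of the Frank--Lenzmann--Silvestre machinery and is not reproduced by your sketch. Your fallback --- quote the radial nondegeneracy from \cite{Ao} (resting on the uniqueness result of \cite{Chan}) --- is legitimate, but at that point you are citing the essential content of the theorem, which is exactly what the paper does for the whole statement. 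So: correct skeleton, standard-but-unproved lemmas for the modes $|k|\ge 1$, and an unclosed hole at $k=0$ that cannot be closed by the elementary means you propose.
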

Since the parameter $a$ in the $x_2$-coordinate of $p$ and $q$ is $0$ or $l/4$, it is easy to see that $\Psi_\varepsilon$ is $l$-symmetric. From Theorem \ref{thm3-1}, we deduce that the kernel of $\mathbb L_\varepsilon$ is one-dimensional, which is spanned by
\begin{equation*}
	Z_\varepsilon(x)=Z_{1,\varepsilon}(x)-Z_{2,\varepsilon}(x),
\end{equation*}
where
\begin{equation*}
	Z_{1,\varepsilon}(x)=\varepsilon^{2s-2}\sum\limits_{k\in \mathbb Z}\partial_{x_1}U_{1,\varepsilon}(x+kl\boldsymbol e_2), \ \ \  Z_{2,\varepsilon}(x)=\sigma^{2s-2}\sum\limits_{k\in \mathbb Z}\partial_{x_1}U_{2,\varepsilon}(x+kl\boldsymbol e_2).
\end{equation*}
Hence we are to consider the following projected linear problem:
\begin{equation}\label{3-4}
	\begin{cases}
		\mathbb L_\varepsilon\omega_\varepsilon=h(x)+\alpha_\varepsilon f_u(x,\Psi_\varepsilon) Z_\varepsilon(x) \ \ \ \text{in} \ \ \mathbb R\times (-l/2,l/2),\\
		\int_{\mathbb R\times (-l/2,l/2)} f_u(x,\Psi_\varepsilon) Z_\varepsilon(x) \omega_\varepsilon(x)dx=0,\\
		\omega_\varepsilon(x)\to 0 \ \ \ \text{as} \ \ |x_1|\to\infty.
	\end{cases}
\end{equation}
Moreover, we assume that $h(x)$ is $l$-symmetric, and satisfies
\begin{equation}\label{3-5}
supp(h(x))\subset B_{L\varepsilon}(p)\cup B_{L\sigma}(q)
\end{equation}
for some large constant $L>0$. The norms we will use to deal with \eqref{3-4} are
\begin{equation*}
	\|\omega_\varepsilon\|_*=\sup\limits_{x\in \mathbb R\times (-l/2,l/2)} \rho(x)^{-1}|\omega_\varepsilon(x)|,
\end{equation*}
where
\begin{equation*}
	\rho(x)=\left|\frac{1}{\varepsilon^{2-2s}+|x-p|^{2-2s}}-\frac{1}{\sigma^{2-2s}+|x-q|^{2-2s}}\right|+\lim\limits_{N\to\infty}\sum\limits_{k=1}^N\frac{1}{|x+kl\boldsymbol e_2|^{4-2s}},
\end{equation*}
and
\begin{equation*}
	\|h\|_{**}=\sup\limits_{x\in \mathbb R_-\times (-l/2,l/2)}\varepsilon^2 |h(x)|+ \sup\limits_{x\in \mathbb R_+\times (-l/2,l/2)}\sigma^2|h(x)|.
\end{equation*}

We have the following a priori estimate for \eqref{3-4}.
\begin{lemma}\label{lem3-1}
	Assume that $h(x)$ is $l$-symmetric, which satisfies \eqref{3-5} and $\|h\|_{**}<\infty$. Then there exists a small $\varepsilon_0>0$ and a positive constant $C$ such that for any $\varepsilon\in (0,\varepsilon_0)$ and solution pair $(\omega_\varepsilon, \alpha_\varepsilon)$ to \eqref{3-4}, it holds
	\begin{equation}\label{3-6}
		\|\omega_\varepsilon\|_*+(\sigma(\varepsilon))^{-1}|\alpha_\varepsilon|\le C\|h\|_{**}.
	\end{equation}
\end{lemma}
\begin{proof}
	First, let us estimate the second term of the left hand side of \eqref{3-6} and prove
	\begin{equation}\label{3-7}
		(\sigma(\varepsilon))^{-1}|\alpha_\varepsilon|\le C(\|h\|_{**}+o_\varepsilon(1)\|\omega_\varepsilon\|_*).
	\end{equation}
    From \eqref{3-4}, the coefficient $\alpha_\varepsilon$ is given by
    \begin{equation*}
    	\alpha_\varepsilon\int_{\mathbb R\times (-l/2,l/2)}f_u(x,\Psi_\varepsilon)Z_\varepsilon^2dx=\int_{\mathbb R\times (-l/2,l/2)}Z_\varepsilon\mathbb L_\varepsilon\omega_\varepsilon dx-\int_{\mathbb R\times (-l/2,l/2)}hZ_\varepsilon dx.
    \end{equation*}
	According to the expansion of $f_u(x,\Psi_\varepsilon)$ in \eqref{3-2}, we have
	\begin{equation}\label{3-8}
		\begin{split}
		\int_{\mathbb R\times (-l/2,l/2)}f_u(x,\Psi_\varepsilon&)Z_\varepsilon^2dx=(1+o_\varepsilon(1))\varepsilon^{2s-4}\int_{\mathbb R\times (-l/2,l/2)}\gamma_1(U_{1,\varepsilon}-\mu_+^{-\frac{2s}{\gamma_1-1}})_+^{\gamma_1-1}\left(\frac{\partial U_{1,\varepsilon}}{\partial y^1_1}\right)^2dy^1\\
		& \ \ \ \ +(1+o_\varepsilon(1))\sigma^{2s-4}\int_{\mathbb R\times (-l/2,l/2)}\gamma_2(U_{2,\varepsilon}-\mu_-^{-\frac{2s}{\gamma_2-1}})_+^{\gamma_2-1}\left(\frac{\partial U_{2,\varepsilon}}{\partial y^2_1}\right)^2dy^2\\
		& \ \ \ \ =c_1(1+o_\varepsilon(1))\varepsilon^{2s-4}+c_2(1+o_\varepsilon(1))\sigma^{2s-4},
		\end{split}
	\end{equation}
    where $y^1=\frac{x}{\varepsilon\mu_+}$, $y^2=\frac{x}{\sigma\mu_-}$, and $c_1,c_2>0$ are some constants. On the other hand, it holds
    \begin{equation*}
    	\begin{split}
    	&\int_{\mathbb R\times (-l/2,l/2)}Z_\varepsilon\mathbb (-\Delta)_*^s\omega_\varepsilon dx=\int_{\mathbb R\times (-l/2,l/2)}\omega_\varepsilon (-\Delta)_*^sZ_\varepsilon dx\\
    	&=\int_{\mathbb R\times (-l/2,l/2)}\omega_\varepsilon\left( \varepsilon^{-2s}\gamma_1\left(U_{1,\varepsilon}(x)-\mu_+^{-\frac{2s}{\gamma_1-1}}\right)_+^{\gamma_1-1}Z_{1,\varepsilon}-\sigma^{-2s}\gamma_2\left(U_{2,\varepsilon}(x)-\mu_+^{-\frac{2s}{\gamma_2-1}}\right)_+^{\gamma_2-1}Z_{2,\varepsilon}\right)dx.
    	\end{split}
    \end{equation*}
    For $x_1<0$, we have
    \begin{equation*}
    	\begin{split}
    		&\left|\varepsilon^{-2s}\gamma_1\left(U_{1,\varepsilon}(x)-\mu_+^{-\frac{2s}{\gamma_1-1}}\right)_+^{\gamma_1-1}Z_{1,\varepsilon}-\sigma^{-2s}\gamma_2\left(U_{2,\varepsilon}(x)-\mu_+^{-\frac{2s}{\gamma_2-1}}\right)_+^{\gamma_2-1}Z_{2,\varepsilon}-f_u(x,\Psi_\varepsilon)Z_\varepsilon\right|\\
    		&=\left|\varepsilon^{-2s}\gamma_1\left(U_{1,\varepsilon}(x)-\mu_+^{-\frac{2s}{\gamma_1-1}}\right)_+^{\gamma_1-1}Z_{1,\varepsilon}-\varepsilon^{-2s}\gamma_1\left(U_{1,\varepsilon}(x)-\mu_+^{-\frac{2s}{\gamma_1-1}}+O(\varepsilon^{3-2s})\right)_+^{\gamma_1-1}Z_\varepsilon\right|\\
    		&\le C\varepsilon^{-3+(3-2s)\min\{\gamma_1-1,1\}}\chi_{B_{L\varepsilon}(p)}.
    	\end{split}
    \end{equation*}
    Similarly, for $x_1>0$, the term is
    \begin{equation*}
    	\begin{split}
    		&\left|-\sigma^{-2s}\gamma_2\left(U_{2,\varepsilon}(x)-\mu_+^{-\frac{2s}{\gamma_2-1}}\right)_+^{\gamma_2-1}Z_{2,\varepsilon}+\sigma^{-2s}\gamma_2\left(U_{2,\varepsilon}(x)-\mu_+^{-\frac{2s}{\gamma_2-1}}+O(\sigma^{3-2s})\right)_+^{\gamma_2-1}Z_\varepsilon\right|\\
    		&\le C\sigma^{-3+(3-2s)\min\{\gamma_2-1,1\}}\chi_{B_{L\sigma}(q)}.
    	\end{split}
    \end{equation*}
    Hence we derive from H\"older inequality that
    \begin{equation}\label{3-9}
    	\left|\int_{\mathbb R\times (-l/2,l/2)}Z_\varepsilon\mathbb L_\varepsilon\omega_\varepsilon dx\right|\le o_\varepsilon(1)\cdot\|\omega_\varepsilon\|_*\sigma^{2s-3}.
    \end{equation}
    By the definition of norm $\|h\|_{**}$, we also have
    \begin{equation}\label{3-10}
    	\left|\int_{\mathbb R\times (-l/2,l/2)}hZ_\varepsilon dx\right|\le \|h\|_{**}\sigma^{2s-3}.
    \end{equation}
    Then \eqref{3-7} follows directly from \eqref{3-8} \eqref{3-9} and \eqref{3-10}.

    Next, we estimate the first term of the left hand side of \eqref{3-6} and prove
    \begin{equation}\label{3-11}
    	\|\omega_\varepsilon\|_*\le C\|h\|_{**}.
    \end{equation}
    To this end, we will argue by contradiction. Suppose that there exists a sequence $\{\varepsilon_n\}$ satisfying $\varepsilon_n\to 0$, solution pairs $(\omega_{\varepsilon_n}, \alpha_{\varepsilon_n})$ to \eqref{3-4} for some $h_{\varepsilon_n}$, such that
    \begin{equation}\label{3-12}
    		\|\omega_{\varepsilon_n}\|_*=1, \ \ \ \|h_{\varepsilon_n}\|_{**}\to 0 \ \ \ \text{as} \ n\to\infty.
    \end{equation}
    We want to show that for any $L>0$, it always holds
    \begin{equation}\label{3-13}
    	\varepsilon_n^{2-2s}\|\omega_{\varepsilon_n}\|_{L^\infty(B_{L\varepsilon}(p))}+\sigma_n^{2-2s}\|\omega_{\varepsilon_n}\|_{L^\infty(B_{L\sigma_n}(q))}\to 0 \ \ \ \text{as} \ \ n\to\infty,
    \end{equation}
    where $\sigma_n=\sigma(\varepsilon_n)$.

    Suppose \eqref{3-13} is not true, without loss of generality, we assume that the first term satisfies for some constant $\Lambda_0>0$ $$\varepsilon_n^{2-2s}\|\omega_{\varepsilon_n}\|_{L^\infty(B_{L\varepsilon_n}(p))}\ge \Lambda_0.$$
    Set
    \begin{equation*}
    	\tilde \omega_{\varepsilon_n}(y)=\varepsilon_n^{2-2s}\mu_+^{\frac{2s}{\gamma_1-1}}\omega_{\varepsilon_n}(\varepsilon_n\mu_+ y+p).
    \end{equation*}
    From \eqref{3-4}, on every compact set $\tilde\omega_{\varepsilon_n}(y)$ satisfies
    \begin{equation*}
    	\begin{split}
    	(-\Delta)^s \tilde\omega_{\varepsilon_n}(y)&-\gamma_1(U_1-1+O(\varepsilon_n^{3-2s}))_+^{\gamma-1}\tilde\omega_{\varepsilon_n}(y)+o_{\varepsilon_n}(1)\\
    	&=\varepsilon_n^2\mu_+^{\frac{2s\gamma_1}{\gamma_1-1}}h_{\varepsilon_n}(\varepsilon_n\mu_+ y+p)+\varepsilon_n^{-1}\alpha_{\varepsilon_n}\gamma_1\big(U_1-1+O(\varepsilon_n^{3-2s})\big)_+^{\gamma_1-1}\left(\frac{\partial U_1}{\partial y_1}+o_{\varepsilon_n}(1)\right),
    	\end{split}
    \end{equation*}
    which is equivalent to
    \begin{equation*}
    	(- \Delta)^s \tilde\omega_{\varepsilon_n}(y)-\gamma_1(U_1-1)_+^{\gamma_1-1}\tilde\omega_{\varepsilon_n}(y)+o_{\varepsilon_n}(1)=\mathcal R_n(y),
    \end{equation*}
    where
    \begin{equation*}
    \mathcal R_n(y)=\varepsilon_n^2\mu_+^{\frac{2s}{\gamma_1-1}}h_{\varepsilon_n}(\varepsilon_n\mu_+ y+p)+o_{\varepsilon_n}(1)\cdot \tilde\omega_{\varepsilon_n}(y)+\varepsilon_n^{-1}\alpha_{\varepsilon_n}\gamma_1\big(U_1-1+o_{\varepsilon_n}(1)\big)_+^{\gamma_1-1}\left(\frac{\partial U_1}{\partial y_1}+o_{\varepsilon_n}(1)\right).
    \end{equation*}
    Since $\varepsilon_n^2\mu_+^{\frac{2s}{\gamma_1-1}}h_{\varepsilon_n}(y)\to 0$, and $\varepsilon_n^{-1}\alpha_{\varepsilon_n}\le \sigma_n^{-1}\alpha_{\varepsilon_n}\le C(\|h\|_{**}+o_{\varepsilon_n}(1)\|\omega_{\varepsilon_n}\|_*)=o_{\varepsilon_n}(1)$ on every compact set by \eqref{3-7} and \eqref{3-12}, we have $\mathcal R_n(y)\to 0$ as $n\to\infty.$

    Let $n\to \infty$, we may assume that $\tilde\omega_{\varepsilon_n}$ converge uniformly on compact sets to a function $\tilde \omega$ satisfying
    \begin{equation}\label{3-14}
    	\|\tilde \omega\|_{L^\infty(B_{L\mu_+^{-1}}(0))}\ge \Lambda_0\mu_+^{\frac{2s}{\gamma_1-1}}.
    \end{equation}
    However, $\tilde \omega$ is even in $y_2$ direction, and is a solution to
    \begin{equation*}
    	(-\Delta)^s \tilde\omega(y)-\gamma_1(U_1-1)_+^{\gamma_1-1}\tilde\omega(y)=0
    \end{equation*}
    with $\tilde\omega(y)\to 0$ as $|y|\to \infty$. Furthermore, it satisfies the orthogonality condition
    \begin{equation*}
    	\int_{\mathbb{R}^2}\gamma_1(U_1-1)^{\gamma_1-1}\tilde\omega \frac{\partial U_1}{\partial y_1}dy=0.
    \end{equation*}
    According to Theorem \ref{thm3-1}, it must hold $\tilde \omega\equiv 0$, which is a contradiction to \eqref{3-14}. Hence we deduce that $\varepsilon_n^{2-2s}\|\omega_{\varepsilon_n}\|_{L^\infty(B_{L\varepsilon_n}(p))}\to 0$. For the second term in \eqref{3-13}, we can use a similar method to prove $\sigma^{2-2s}\|\omega_{\varepsilon_n}\|_{L^\infty(B_{L\sigma_n}(q))}\to 0$. Hence we have proved \eqref{3-13}.

    In view of \eqref{3-4}, $\omega_{\varepsilon_n}$ satisfies
    \begin{equation*}
    	(- \Delta)_*^s\omega_{\varepsilon_n}=f_u(x,\Psi_0)\omega_{\varepsilon_n}+h_{\varepsilon_n}+\alpha_{\varepsilon_n}f_u(x,\Psi_{\varepsilon_n})Z_{\varepsilon_n}
    \end{equation*}
    Using the explicit formulation \eqref{2-2} of $(-\Delta)_*^{-s}$, we have
    \begin{equation*}
    	\omega_{\varepsilon_n}(x)=\int_{\mathbb R\times (-l/2,l/2)}K_s(x-z)\left(f_u(x,\Psi_0)\omega_{\varepsilon_n}(x)+h_{\varepsilon_n}(x)+\alpha_{\varepsilon_n}f_u(x,\Psi_{\varepsilon_n})Z_{\varepsilon_n}(x)\right)dz
    \end{equation*}
    for the kernel $K_s$ given in \eqref{2-1}, which implies
    \begin{equation*}
    	\rho(x)^{-1}|\omega_{\varepsilon_n}(x)|\le C\left( \varepsilon_n^{2-2s}\|\omega_{\varepsilon_n}\|_{L^\infty(B_{L\varepsilon_n}(p))}+
    \sigma_n^{2-2s}\|\omega_{\varepsilon_n}\|_{L^\infty(B_{L\sigma_n}(q))}+\|h_{\varepsilon_n}\|_{**}+\sigma_n^{-1}\alpha_{\varepsilon_n}\right).
    \end{equation*}
    Hence if we combine \eqref{3-7} \eqref{3-12} \eqref{3-13}, we can obtain $\|\omega_{\varepsilon_n}\|_*\to 0$ as $n\to\infty$, which is a contradiction to \eqref{3-12} and yields \eqref{3-11}. To finish our proof, we notice that \eqref{3-6} is the consequence of \eqref{3-7} and \eqref{3-11}.
\end{proof}

Using the a priori estimate given in Lemma \ref{lem3-1}, we have the following result for \eqref{3-4}.
\begin{lemma}\label{lem3-2}
	Assume that $h(x)$ is $l$-symmetric, which satisfies \eqref{3-5} and $\|h\|_{**}<\infty$. Then there exists a small $\varepsilon_0>0$ such that for any $\varepsilon\in (0,\varepsilon_0)$, \eqref{3-4} has a unique solution $\omega_\varepsilon=T_\varepsilon h$, where $T_\varepsilon$ is a linear operator of $h$. Moreover, there exists a constant $C>0$ independent of $\varepsilon$ such that
	\begin{equation*}
		\|\omega_\varepsilon\|_*\le C\|h\|_{**}.
	\end{equation*}
\end{lemma}
\begin{proof}
	Denote the Hilbert space
	\begin{equation*}
		H:=\left\{ g\in \dot H^s(\mathbb R\times (-l/2,l/2)) \ : \   g \ \text{is} \ l\text{-symmetric}, \ \int_{\mathbb R\times (-l/2,l/2)} f_u(x,\Psi_\varepsilon) Z_\varepsilon(x) g(x) dx=0\right\}
	\end{equation*}
    endowed with the inner product
    \begin{equation*}
    [u,g]=\int_{\mathbb R\times (-l/2,l/2)}\int_{\mathbb R\times (-l/2,l/2)}J_s(x-z)\left(u(x)-u(z)\right)\left(g(x)-g(z)\right)dxdz,
    \end{equation*}
	where the kernel  $J_s(x)$ is defined in \eqref{2-1}. Then we can express \eqref{3-4} in a weak form, namely, to find $\omega_\varepsilon\in H$ such that
	\begin{equation*}
		[\omega_\varepsilon,g]=\langle f_u(x,\Psi_\varepsilon)\omega_\varepsilon+h , g  \rangle, \ \ \ \ \ \forall \, g\in H.
	\end{equation*}
    According to Riesz's representation theorem, the above equation has a equivalent operational form
    \begin{equation*}
    	\omega_\varepsilon=(-\Delta)_*^{-s}(f_u(x,\Psi_\varepsilon)\omega_\varepsilon)+(-\Delta_*)^{-s}h.
    \end{equation*}
	Notice that  $(-\Delta)_*^{-s}(f_u(x,\Psi_\varepsilon)(\cdot))$ is a compact operator on $H$. By Fredholm's alternative, this equation has a unique solution for any $h$ if the homogeneous equation
	\begin{equation*}
		\omega_\varepsilon=(-\Delta)_*^{-s}(f_u(x,\Psi_\varepsilon)\omega_\varepsilon)
	\end{equation*}
	has only trivial solution in $H$, which can be obtained by Lemma \ref{lem3-1}. The estimate $\|\omega_\varepsilon\|_*\le C\|h\|_{**}$ follows from \eqref{3-11}. Hence the proof is complete.	
\end{proof}

\subsection{The reduction}
 To solve \eqref{3-3}, we will first solve \eqref{3-4} for
 \begin{equation}\label{4-1}
 	h(x)=-E_\varepsilon+R_\varepsilon(\omega_\varepsilon).
 \end{equation}
Then we will deal with a one-dimensional problem so that $\alpha_\varepsilon=0$, which can be achieved by choosing suitable travelling speed $W_\varepsilon$. This process is known as the Lyapunov-Schmidlt reduction. For the solvability of \eqref{3-4} and \eqref{4-1}, we have the following lemma.
\begin{lemma}\label{lem4-1}
	There are $\varepsilon_0>0$ and $r_0>0$ such that for $\varepsilon\in(0,\varepsilon_0)$ there exists a unique solution $\omega_\varepsilon$ to \eqref{3-4} and \eqref{4-1} in the ball $\|\omega_\varepsilon\|_*\le r_0$. Moreover, it holds
	\begin{equation}\label{4-2}
		\|\omega_\varepsilon\|_*\le C\varepsilon^{3-2s}
	\end{equation}
    for some constant $C>0$, and $\omega_\varepsilon$ is continuous with respect to $\varepsilon$.
\end{lemma}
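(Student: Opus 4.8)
The plan is to solve \eqref{3-3} by a fixed-point argument built on the linear solution operator $T_\varepsilon$ from Lemma \ref{lem3-2}. Set
\begin{equation*}
	\mathcal A_\varepsilon(\omega):=T_\varepsilon\big({-E_\varepsilon}+R_\varepsilon(\omega)\big),
\end{equation*}
acting on the closed ball $\mathcal B_{r_0}=\{\omega\ l\text{-symmetric}:\|\omega\|_*\le r_0\}$; a function $\omega_\varepsilon$ solving \eqref{3-4} with right-hand side \eqref{4-1} (for some $\alpha_\varepsilon=\alpha_\varepsilon(\omega_\varepsilon)$ fixed by the orthogonality in \eqref{3-4}) is exactly a fixed point of $\mathcal A_\varepsilon$ in $\mathcal B_{r_0}$. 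So it suffices to show that, for $\varepsilon$ small and $r_0$ suitably chosen, $\mathcal A_\varepsilon$ maps $\mathcal B_{r_0}$ into itself and is a contraction there; Banach's fixed point theorem then gives existence and uniqueness. First I would check that $h=-E_\varepsilon+R_\varepsilon(\omega)$ is admissible for Lemma \ref{lem3-2}: it is $l$-symmetric since $\Psi_\varepsilon$ and $\omega$ are, and because $(U_i-1)_+$ have compact support and $\|\omega\|_*\le r_0$ is small, the positive parts in $f$ are supported in $B_{L\varepsilon}(p_\varepsilon)\cup B_{L\sigma}(q_\varepsilon)$ once $L$ is enlarged, so \eqref{3-5} holds.

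Two quantitative inputs are then needed. From \eqref{2-9}, $E_\varepsilon=O(\varepsilon^{1-2s})\chi_{B_{L\varepsilon}(p_\varepsilon)}+O(\sigma^{1-2s})\chi_{B_{L\sigma}(q_\varepsilon)}$, so by the definition of $\|\cdot\|_{**}$ and $\sigma\le C\varepsilon$ from \textbf{(H)},
\begin{equation*}
	\|E_\varepsilon\|_{**}\le C\varepsilon^{2}\cdot\varepsilon^{1-2s}+C\sigma^{2}\cdot\sigma^{1-2s}\le C\varepsilon^{3-2s}.
\end{equation*}
For the remainder, on $B_{L\varepsilon}(p_\varepsilon)$ the nonlinearity is $\varepsilon^{(2-2s)\gamma_1-2}(\cdot)_+^{\gamma_1}$, and using the pointwise bound $|\omega(x)|\le\|\omega\|_*\rho(x)\le C\|\omega\|_*\varepsilon^{2s-2}$ valid there together with the elementary inequality $|(t+\omega)_+^{\gamma}-(t)_+^{\gamma}-\gamma(t)_+^{\gamma-1}\omega|\le C\min((t)_+^{\gamma-2}\omega^2,|\omega|^{\gamma})$ (the first term for $\gamma\ge2$, the second for $1<\gamma<2$) and $\Psi_\varepsilon+Wx_1-\varepsilon^{2s-2}\lambda_+=O(\varepsilon^{2s-2})$ on that ball, one obtains after rescaling to $y^1$ and using \eqref{2-6} that $\varepsilon^{2}|R_\varepsilon(\omega)|\le C\|\omega\|_*^{\min(\gamma_1,2)}$; the symmetric computation on $B_{L\sigma}(q_\varepsilon)$ gives $\sigma^{2}|R_\varepsilon(\omega)|\le C\|\omega\|_*^{\min(\gamma_2,2)}$. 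Hence, with $\delta_0:=\min(\gamma_1,\gamma_2,2)-1>0$,
\begin{equation*}
	\|R_\varepsilon(\omega)\|_{**}\le C\|\omega\|_*^{1+\delta_0}\quad\text{for }\|\omega\|_*\le r_0\le1,
\end{equation*}
and differentiating the same estimate once more yields the Lipschitz bound $\|R_\varepsilon(\omega_1)-R_\varepsilon(\omega_2)\|_{**}\le C(\|\omega_1\|_*+\|\omega_2\|_*)^{\delta_0}\|\omega_1-\omega_2\|_*$.

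Combining these with $\|T_\varepsilon h\|_*\le C\|h\|_{**}$ from Lemma \ref{lem3-2}, one gets on $\mathcal B_{r_0}$
\begin{equation*}
	\|\mathcal A_\varepsilon(\omega)\|_*\le C_0\varepsilon^{3-2s}+C_1r_0^{1+\delta_0},\qquad\|\mathcal A_\varepsilon(\omega_1)-\mathcal A_\varepsilon(\omega_2)\|_*\le C_1(2r_0)^{\delta_0}\|\omega_1-\omega_2\|_*.
\end{equation*}
Fixing $r_0$ small so that $C_1(2r_0)^{\delta_0}\le\tfrac12$ and $C_1r_0^{\delta_0}\le\tfrac14$, and then $\varepsilon_0$ small so that $C_0\varepsilon^{3-2s}\le r_0/2$ for $\varepsilon<\varepsilon_0$, $\mathcal A_\varepsilon$ becomes a $\tfrac12$-contraction of $\mathcal B_{r_0}$ into itself and has a unique fixed point $\omega_\varepsilon$; applying the contraction inequality with $\omega_1=\omega_\varepsilon$, $\omega_2=0$ and noting $\mathcal A_\varepsilon(0)=T_\varepsilon(-E_\varepsilon)$ gives $\|\omega_\varepsilon\|_*\le\tfrac12\|\omega_\varepsilon\|_*+C_0\varepsilon^{3-2s}$, hence \eqref{4-2}. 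Continuity of $\omega_\varepsilon$ in $\varepsilon$ follows by estimating $\omega_\varepsilon-\omega_{\varepsilon'}=\mathcal A_\varepsilon(\omega_\varepsilon)-\mathcal A_{\varepsilon'}(\omega_{\varepsilon'})$, inserting $\mathcal A_\varepsilon(\omega_{\varepsilon'})$, and using the contraction in $\omega$ together with the continuous dependence of $E_\varepsilon$, $f_u(\cdot,\Psi_\varepsilon)$ and $Z_\varepsilon$ on $\varepsilon$. The main obstacle is the remainder estimate: one must simultaneously track the two distinct scales $\varepsilon$ and $\sigma(\varepsilon)$ and deal with the degeneracy of $(\cdot)_+^\gamma$ near the free boundary when $1<\gamma<2$ (which only allows the H\"older-type bound $|R|\lesssim|\omega|^\gamma$), and it is precisely here---and in the error bound above---that assumption \textbf{(H)} with $\tau=\min(\gamma_2,2)$ is used, so that the $\sigma$-contributions stay comparable to the $\varepsilon$-ones and do not spoil the rate $\varepsilon^{3-2s}$.
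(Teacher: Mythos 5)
Your proposal is correct and follows essentially the same route as the paper: the same contraction map $\mathcal A_\varepsilon=T_\varepsilon(-E_\varepsilon+R_\varepsilon(\cdot))$ on the ball $\mathcal B_{r_0}$, the same inputs $\|E_\varepsilon\|_{**}\le C\varepsilon^{3-2s}$ from \eqref{2-9} and $\|R_\varepsilon(\omega)\|_{**}\le C\|\omega\|_*^{\min(\gamma_1,\gamma_2,2)}$ with the corresponding Lipschitz bound, and the same conclusion via Banach's fixed point theorem. Your extraction of \eqref{4-2} from the contraction inequality with $\mathcal A_\varepsilon(0)=T_\varepsilon(-E_\varepsilon)$ is a slightly more explicit version of the paper's $\|\omega_\varepsilon\|_*\le C\|E_\varepsilon\|_{**}$, but it is not a different argument.
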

\begin{proof}
	Since $h_\varepsilon(x)$ is given in \eqref{4-1}, it is easy to verify that $h(x)$ is $l$-symmetric and satisfies \eqref{3-5}. Hence from Lemma \ref{lem3-2}, for the $h(x)$, we have the estimate
	\begin{equation}\label{4-3}
		\|T_\varepsilon h\|_*\le C\|h\|_{**}.	
	\end{equation}
	
	Denote
	\begin{equation*}
		X:=\{u\in L^\infty(\mathbb\times(-l/2,l/2)) \ : \ u \ \text{is} \ l\text{-symmetric}, \ \|u\|_*<\infty, \}
	\end{equation*}
	endowed with the norm $\|\cdot\|_*$, and $\mathcal A_\varepsilon: X\to X$ the operator given by
	\begin{equation*}
		\mathcal A_\varepsilon\omega_\varepsilon:=T_\varepsilon(-E_\varepsilon+R_\varepsilon(\omega_\varepsilon)).
	\end{equation*}
	If we let
	\begin{equation*}
		\mathcal B_{r_0}:=\{\omega_\varepsilon\in X \ : \ \|\omega_\varepsilon\|_*\le r_0\}
	\end{equation*}
	be a closed neighborhood of the origin in $X$, then solve equation \eqref{3-4} is equivalent to find a fixed point of $\mathcal A_\varepsilon$
in $\mathcal B_{r_0}$,
	\begin{equation*}
		\mathcal A_\varepsilon\omega_\varepsilon=\omega_\varepsilon.
	\end{equation*}

	In the following we prove that $\mathcal A_\varepsilon$ does have a fixed point in $\mathcal B_{r_0}$ by showing that $\mathcal A_\varepsilon$ is a contraction
map in $\mathcal B_{r_0}$. First we show that $\mathcal A_\varepsilon$ maps $\mathcal B_{r_0}$ into itself.
	From \eqref{2-9}, we have
	\begin{equation}\label{4-4}
		\|E_\varepsilon\|_{**}\le C\varepsilon^{3-2s}.
	\end{equation}
	Then we are to estimate $\|R_\varepsilon(\omega_\varepsilon)\|_{**}$. To this aim, we can split $R_\varepsilon(\omega_\varepsilon)$ into two terms
	\begin{equation*}
		R_\varepsilon(\omega_\varepsilon)=R_{1,\varepsilon}(\omega_\varepsilon)+R_{2,\varepsilon}(\omega_\varepsilon),
	\end{equation*}
	where
	\begin{equation*}
		\begin{split}
			R_{1,\varepsilon}(\omega_\varepsilon)=\varepsilon^{(2-2s)\gamma_1-2}&\bigg((\Psi_\varepsilon+\omega_\varepsilon+W_\varepsilon x_1-\varepsilon^{2s-2}\lambda_+)_+^{\gamma_1}\\
			&-(\Psi_\varepsilon+W_\varepsilon x_1-\varepsilon^{2s-2}\lambda_+)_+^{\gamma_1}-\gamma_1(\Psi_\varepsilon+W_\varepsilon x_1-\varepsilon^{2s-2}\lambda_+)_+^{\gamma_1-1}\omega_\varepsilon\bigg)\chi_{B_{L\varepsilon}(p)}
		\end{split}
	\end{equation*}
    and
    \begin{equation*}
    	\begin{split}
    		R_{2,\varepsilon}(\omega_\varepsilon)=\sigma^{(2-2s)\gamma_2-2}&\bigg((-\Psi_\varepsilon-\omega_\varepsilon-W_\varepsilon x_1-\sigma^{2s-2}\lambda_+)_+^{\gamma_2}\\
    		&-(-\Psi_\varepsilon-W_\varepsilon x_1-\sigma^{2s-2}\lambda_+)_+^{\gamma_2}+\gamma_2(-\Psi_\varepsilon-W_\varepsilon x_1-
            \sigma^{2s-2}\lambda_+)_+^{\gamma_2-1}\omega_\varepsilon\bigg)\chi_{B_{L\sigma}(p)}.
    	\end{split}
    \end{equation*}
	By the choice of $\lambda_+$ in \eqref{2-7}, we have
	\begin{equation*}
		\begin{split}
			R_{1,\varepsilon}(\omega_\varepsilon)&=\varepsilon^{-2}\bigg((\sum\limits_{k\in \mathbb Z}U_{1,\varepsilon}(x+kl\boldsymbol e_2)-\frac{\varepsilon^{2-2s}}{\sigma^{2-2s}}\sum\limits_{k\in \mathbb Z}U_{1,\varepsilon}(x+kl\boldsymbol e_2)+\varepsilon^{2-2s}\omega_\varepsilon-\mu_+^{-\frac{2s}{\gamma_1-1}}+O(\varepsilon^{3-2s}))_+^{\gamma_1}\\
			& \ \ \ -(\sum\limits_{k\in \mathbb Z}U_{1,\varepsilon}(x+kl\boldsymbol e_2)-\frac{\varepsilon^{2-2s}}{\sigma^{2-2s}}\sum\limits_{k\in \mathbb Z}U_{1,\varepsilon}(x+kl\boldsymbol e_2)-\mu_+^{-\frac{2s}{\gamma_1-1}}+O(\varepsilon^{3-2s}))_+^{\gamma_1}\\
			&-\gamma_1(\sum\limits_{k\in \mathbb Z}U_{1,\varepsilon}(x+kl\boldsymbol e_2)-\frac{\varepsilon^{2-2s}}{\sigma^{2-2s}}\sum\limits_{k\in \mathbb Z}U_{1,\varepsilon}(x+kl\boldsymbol e_2)-\mu_+^{-\frac{2s}{\gamma_1-1}}+O(\varepsilon^{3-2s}))_+^{\gamma_1-1}\varepsilon^{2-2s}\omega_\varepsilon\bigg)\chi_{B_{L\varepsilon}(p)},
		\end{split}
	\end{equation*}
    which yields $\|R_{1,\varepsilon}(\omega_\varepsilon)\|_{**}\le C\|\omega_\varepsilon\|_*^{\min\{\gamma_1,2\}}$. Using a similar method, we can also prove $\|R_{2,\varepsilon}(\omega_\varepsilon)\|_{**}\le C\|\omega_\varepsilon\|_*^{\min\{\gamma_2,2\}}$. So we conclude that
    \begin{equation}\label{4-5}
    	\|R_\varepsilon(\omega_\varepsilon)\|_{**}\le C\|\omega_\varepsilon\|_*^{\min\{\gamma_1,\gamma_2,2\}}.
    \end{equation}
    If we combine \eqref{4-3} \eqref{4-4} and \eqref{4-5}, we deduce that for $\omega_\varepsilon\in \mathcal B_{r_0}$, it holds
    \begin{equation*}
    	\|\mathcal A_\varepsilon\omega_\varepsilon\|\le C\varepsilon^{3-2s}+Cr_0^{\min\{\gamma_1,\gamma_2,2\}}.
    \end{equation*}
    As a result, $\mathcal A_\varepsilon$ maps $\mathcal B_{r_0}$ into itself if we choose $\varepsilon$ and $r_0$ sufficiently small.
	
	On the other hand, for $\omega_\varepsilon^1,\omega_\varepsilon^2\in \mathcal B_{r_0}$ we have
	\begin{equation*}
		\|R_\varepsilon(\omega_\varepsilon^1)-R_\varepsilon(\omega_\varepsilon^2)\|_{**}\le C(\|\omega_\varepsilon^1\|_*^{\min\{\gamma_1-1,\gamma_2-1,1\}}+\|\omega_\varepsilon^2\|_*^{\min\{\gamma_1-1,\gamma_2-1,1\}})\|\omega_\varepsilon^1-\omega_\varepsilon^2\|_*.
	\end{equation*}
    Hence it holds
    \begin{equation*}
    	\|\mathcal A_\varepsilon\omega_\varepsilon^1-\mathcal A_\varepsilon\omega_\varepsilon^2\|_{**}\le Cr_0^{\min\{\gamma_1-1,\gamma_2-1,1\}}\|\omega_\varepsilon^1-\omega_\varepsilon^2\|_*,
    \end{equation*}
	and $\mathcal A_\varepsilon$ is a contraction mapping from $\mathcal B_{r_0}$ into itself if $r_0$ is sufficiently small. Thus \eqref{3-4} and \eqref{4-1} admits a unique solution $\omega_\varepsilon\in \mathcal B_{r_0}$.
	
	According to the estimate \eqref{4-4}, we derive
	\begin{equation}\label{4-6}
		\|\omega_\varepsilon\|_*\le C\|E_\varepsilon\|_{**}\le C\varepsilon^{3-2s}.
	\end{equation}
    Since $E_\varepsilon$ and $R_\varepsilon(\omega_\varepsilon)$ depend continuously on $\varepsilon$, we see that $\omega_\varepsilon$ is continuous with respect to $\varepsilon$ by the fixed point characterization. So we have finished the proof.
\end{proof}

We have already obtained a solution $\psi_\varepsilon(x)=\Psi_\varepsilon(x)+\omega_\varepsilon(x)$ to
\begin{equation}\label{4-7}
	\begin{cases}
		(-\Delta)_*^{s}\psi_\varepsilon=f(x,\psi_\varepsilon)+\alpha_\varepsilon f_u(x,\Psi_\varepsilon) Z_\varepsilon(x) \ \ \ \text{in} \ \ \mathbb R\times (-l/2,l/2),\\
		\psi_\varepsilon(x)\to 0 \ \ \ \text{as} \ \ |x_1|\to\infty.
	\end{cases}
\end{equation}
If we multiply the first equation of \eqref{4-7} by $Z_\varepsilon$ and integrate over $\mathbb R\times (-l/2,l/2)$, we deduce that
\begin{equation*}
	\alpha_\varepsilon\int_{\mathbb R\times (-l/2,l/2)}f_u(x,\Psi_\varepsilon)Z_\varepsilon^2dx=\int_{\mathbb R\times (-l/2,l/2)}\left((-\Delta)_*^{s}\psi_\varepsilon-f(x,\psi_\varepsilon)\right)Z_\varepsilon dx.
\end{equation*}
To make the right hand side of above equality being zero, we will use the following lemma later.
\begin{lemma}\label{lem4-2}
	It holds
	\begin{equation*}
		\int_{\mathbb R\times (-l/2,l/2)}\left((-\Delta)_*^s\psi_\varepsilon-f(x,\psi_\varepsilon)\right)Z_\varepsilon dx=C\left(\mathcal C_s\lim\limits_{N\to\infty}\sum\limits_{|k|\le N}\frac{p_1-q_1}{|q-p+kl\boldsymbol e_2|^{4-2s}}+W_\varepsilon\right)+o_\varepsilon(1),
	\end{equation*}
    where $C>0$ is a constant independent of $\varepsilon$, and $p_1$, $q_1$ denote the first coordinates of $p$, $q$ respectively.
\end{lemma}
\begin{proof}
	From \eqref{3-4} and \eqref{4-1}, we have
	\begin{equation*}
		(-\Delta)_*^{s}\psi_\varepsilon-f(x,\psi_\varepsilon)=\mathbb{L}_\varepsilon\psi_\varepsilon+E_\varepsilon-R_\varepsilon(\omega_\varepsilon)  \ \ \ \ \ \ \ \text{in} \ \ \mathbb R\times (-l/2,l/2).
	\end{equation*}
    Multiplying this equality by $Z_\varepsilon$ and integrating over $\mathbb R\times (-l/2,l/2)$, one can derive
    \begin{equation*}
    	\begin{split}
    	\int_{\mathbb R\times (-l/2,l/2)}\big((-\Delta)_*^{s}\psi_\varepsilon&-f(x,\psi_\varepsilon)\big)Z_\varepsilon dx\\
    	&=\int_{\mathbb R\times (-l/2,l/2)}\mathbb{L}_\varepsilon\omega_\varepsilon Z_\varepsilon dx+\int_{\mathbb R\times (-l/2,l/2)}E_\varepsilon Z_\varepsilon dx-\int_{\mathbb R\times (-l/2,l/2)}R_\varepsilon(\omega_\varepsilon)Z_\varepsilon dx.
    	\end{split}
    \end{equation*}
	
	We first deal with the term $\int_{\mathbb R\times (-l/2,l/2)}E_\varepsilon Z_\varepsilon dx$. By our choice of $\lambda_+$ and $\lambda_-$ in \eqref{2-7} and \eqref{2-8}, we can split $E_\varepsilon$ as
	\begin{equation*}
		E_\varepsilon=E_{1,\varepsilon}+E_{2,\varepsilon},
	\end{equation*}
    where
    \begin{equation*}
    	\begin{split}
    		E_{1,\varepsilon}&=\varepsilon^{-2}\chi_{B_{L\varepsilon}(p)}\bigg((U_{1,\varepsilon}(x)-\mu_+^{-\frac{2s}{\gamma_1-1}})^{\gamma_1}_+-(U_{1,\varepsilon}(x)-\mu_+^{-\frac{2s}{\gamma_1-1}}\\
    		&+\sum\limits_{k\neq0}U_{1,\varepsilon}(x+kl\boldsymbol e_2)-\frac{\varepsilon^{2-2s}}{\sigma^{2-2s}}\sum\limits_{k\in\mathbb Z}U_{2,\varepsilon}(x+kl\boldsymbol e_2)-\sum\limits_{k\neq0}U_{1,\varepsilon}(p+kl\boldsymbol e_2)\\
    		&+\frac{\varepsilon^{2-2s}}{\sigma^{2-2s}}\sum\limits_{k\in\mathbb Z}U_{2,\varepsilon}(p+kl\boldsymbol e_2)
    		+W_\varepsilon\varepsilon^{2-2s}(x_1-d))^{\gamma_1}_+\bigg),
    	\end{split}
    \end{equation*}
	and
	\begin{equation*}
		\begin{split}
			E_{2,\varepsilon}&=\sigma^{-2}\chi_{B_{L\sigma}(q)}\bigg(-(U_{2,\varepsilon}(x)-\mu_-^{-\frac{2s}{\gamma_2-1}})^{\gamma_2}_++(U_{2,\varepsilon}(x)-\mu_-^{-\frac{2s}{\gamma_2-1}}\\
			&+\sum\limits_{k\neq0}U_{2,\varepsilon}(x+kl\boldsymbol e_2)-\frac{\sigma^{2-2s}}{\varepsilon^{2-2s}}\sum\limits_{k\in\mathbb Z}U_{1,\varepsilon}(x+kl\boldsymbol e_2)-\sum\limits_{k\neq0}U_{2,\varepsilon}(q+kl\boldsymbol e_2)\\
			&+\frac{\varepsilon^{2-2s}}{\sigma^{2-2s}}\sum\limits_{k\in\mathbb Z}U_{1,\varepsilon}(q+kl\boldsymbol e_2)-W_\varepsilon\varepsilon^{2-2s}(x_1-d))^{\gamma_2}_+\bigg).
		\end{split}
     \end{equation*}	
	 Since
	 \begin{equation*}
	 	|E_{1,\varepsilon}|\le C\varepsilon^{-2}\cdot\varepsilon^{3-2s} \ \ \  \text{and} \ \ \ |E_{2,\varepsilon}|\le C\sigma^{-2}\cdot\sigma^{3-2s},
	 \end{equation*}
	 we can show that
	 \begin{equation*}
	 	\int_{\mathbb R\times (-l/2,l/2)}E_{1,\varepsilon} Z_{2,\varepsilon} dx\le C\varepsilon^{3-2s}, \ \ \  \int_{\mathbb R\times (-l/2,l/2)}E_{2,\varepsilon} Z_{1,\varepsilon} dx\le C\sigma^{3-2s}.
	 \end{equation*}
     From Taylor's formula, it holds
     \begin{equation*}
     	\begin{split}
     		E_{1,\varepsilon}=&-\varepsilon^{-2}\chi_{B_{L\varepsilon}(p)}(U_{1,\varepsilon}(x)-\mu_+^{-\frac{2s}{\gamma_1-1}})^{\gamma_1-1}_+\bigg(\sum\limits_{k\neq0}U_{1,\varepsilon}(x+kl\boldsymbol e_2)\\
     		&-\frac{\varepsilon^{2-2s}}{\sigma^{2-2s}}\sum\limits_{k\in\mathbb Z}U_{2,\varepsilon}(x+kl\boldsymbol e_2)-\sum\limits_{k\neq0}U_{1,\varepsilon}(p+kl\mathbf e_2)+\frac{\varepsilon^{2-2s}}{\sigma^{2-2s}}\sum\limits_{k\in\mathbb Z}U_{2,\varepsilon}(p+kl\boldsymbol e_2)\\
     		&+W_\varepsilon\varepsilon^{2-2s}(x_1-d)\bigg) +\varepsilon^{-2}\cdot\varepsilon^{(3-2s)(\gamma_1-1)}\chi_{B_{L\varepsilon}(p)}.
     	\end{split}
     \end{equation*}
	To compute $\int_{\mathbb R\times (-l/2,l/2)}E_{1,\varepsilon} Z_{1,\varepsilon} dx$, we can integrate by parts and use the asymptotic behavior of $U_{1,\varepsilon}, U_{2,\varepsilon}$ to obtain
	\begin{equation*}
		\begin{split}
			\int_{\mathbb R\times (-l/2,l/2)}&E_{1,\varepsilon} Z_{1,\varepsilon} dx=\varepsilon^{-2}\int_{\mathbb R\times (-l/2,l/2)}(U_{1,\varepsilon}(x)-\mu_+^{-\frac{2s}{\gamma_1-1}})^{\gamma_1}_+\\
			& \ \ \ \ \ \ \ \ \ \ \left(\varepsilon^{2s-2}\sum\limits_{k\neq0}\partial_{x_1}U_{1,\varepsilon}(x+kl\boldsymbol e_2)
			-\sigma^{2s-2}\sum\limits_{k\in\mathbb Z}\partial_{x_1}U_{2,\varepsilon}(x+kl\boldsymbol e_2)+W_\varepsilon+O(\varepsilon)\right)dx\\
			&=\varepsilon^{-2}\int_{\mathbb R\times (-l/2,l/2)}(U_{1,\varepsilon}(x)-\mu_+^{-\frac{2s}{\gamma_1-1}})^{\gamma_1}_+\\
			& \ \ \ \ \ \ \ \ \ \ \left((2-2s)c_s\lim\limits_{N\to\infty}\sum\limits_{|k|\le N}\frac{p_1-q_1}{|p-q+kl\boldsymbol e_2|^{4-2s}}+W_\varepsilon+O(\varepsilon)\right)dx\\
			&=C_1\left(\mathcal C_s\lim\limits_{N\to\infty}\sum\limits_{|k|\le N}\frac{p_1-q_1}{|p-q+kl\boldsymbol e_2|^{4-2s}}+W_\varepsilon\right)+O(\varepsilon),		
		\end{split}
	\end{equation*}
	where we have used symmetry to derive $\lim\limits_{N\to\infty}\sum\limits_{|k|\le N}\partial_{x_1}U_{1,\varepsilon}(p+kl\boldsymbol e_2)=0$, $C_1>0$ is a constant, and $p_1,q_1$ denote the first coordinates of $p,q$ respectively. Similarly, for some $C_2>0$ we can get
	\begin{equation*}
	\begin{split}
		-\int_{\mathbb R\times (-l/2,l/2)}E_{2,\varepsilon} Z_{2,\varepsilon} dx=C_2\left(\mathcal C_s\lim\limits_{N\to\infty}\sum\limits_{|k|\le N}\frac{-q_1+p_1}{|q-p+kl\boldsymbol e_2|^{4-2s}}+W_\varepsilon\right)+O(\sigma).		
	\end{split}
    \end{equation*}
    Hence we have
    \begin{equation}\label{4-8}
    	\begin{split}
    		\int_{\mathbb R\times (-l/2,l/2)}E_\varepsilon Z_\varepsilon dx=C\left(\mathcal C_s\lim\limits_{N\to\infty}\sum\limits_{|k|\le N}\frac{p_1-q_1}{|q-p+kl\boldsymbol e_2|^{4-2s}}+W_\varepsilon\right)+o_\varepsilon(1).
    	\end{split}
    \end{equation}
	
	Recall that $\sigma(\varepsilon)$ satisfies assumption \textbf{(H)} with $\tau=\min\{\gamma_2,2\}$. Since $\|\omega_\varepsilon\|_*\le C\varepsilon^{3-2s}$, for the term $\int_{\mathbb R\times (-l/2,l/2)}R_\varepsilon(\omega_\varepsilon)Z_\varepsilon dx$ we have
	\begin{equation}\label{4-9}
		\begin{split}
		\int_{\mathbb R\times (-l/2,l/2)}R_\varepsilon(\omega_\varepsilon)Z_\varepsilon dx&=\int_{\mathbb R_-\times (-l/2,l/2)}R_\varepsilon(\omega_\varepsilon)Z_\varepsilon dx+\int_{\mathbb R_+\times (-l/2,l/2)}R_\varepsilon(\omega_\varepsilon)Z_\varepsilon dx\\
		&\le C\|\omega_\varepsilon\|_*^{\min\{\gamma_1,2\}}\varepsilon^{-3+2s}+C\|\omega_\varepsilon\|_*^{\min\{\gamma_2,2\}}\sigma^{-3+2s}\\
		&\le C\varepsilon^{(3-2s)\min\{\gamma_1-1, 1\}}+C\varepsilon^{(3-2s)\min\{\gamma_2, 2\}}\sigma^{-3+2s}=o_\varepsilon(1).
	    \end{split}
	\end{equation}
	To deal with the last term $\int_{\mathbb R\times (-l/2,l/2)}\mathbb{L}_\varepsilon\omega_\varepsilon Z_\varepsilon dx$, we can use the estimate in the proof of Lemma \ref{lem3-1} to deduce that
	\begin{equation}\label{4-10}
		\begin{split}
		\int_{\mathbb R\times (-l/2,l/2)}\mathbb{L}_\varepsilon\omega_\varepsilon Z_\varepsilon dx&=\int_{\mathbb R_-\times (-l/2,l/2)}\mathbb{L}_\varepsilon\omega_\varepsilon Z_\varepsilon dx+\int_{\mathbb R_+\times (-l/2,l/2)}\mathbb{L}_\varepsilon\omega_\varepsilon Z_\varepsilon dx\\
		&\le C\|\omega_\varepsilon\|_*\varepsilon^{(3-2s)\min\{\gamma_1-2,0\}}+C\|\omega_\varepsilon\|_*\sigma^{(3-2s)\min\{\gamma_2-2,0\}}\\
		&\le C\varepsilon^{(3-2s)\min\{\gamma_1-1,1\}}+C\varepsilon^{3-2s}\sigma^{(3-2s)\min\{\gamma_2-2,0\}}=o_\varepsilon(1).
		\end{split}
	\end{equation}
	Finally, if we combine \eqref{4-8} \eqref{4-9} and \eqref{4-10}, then the proof is complete.	
\end{proof}

Now we are ready to given proofs for Theorem \ref{thm1} and  \ref{thm2} with $0<s<1$.\\
\vspace{0.3cm}

{\bf Proof of Theorem \ref{thm1} and  \ref{thm2} with $0<s<1$:}
In view of \eqref{4-7}, to obtain a family of desired solutions to \eqref{2-3}, we only need to find suitable $W_\varepsilon$ so that the corresponding $\alpha_\varepsilon=0$. Notice that we already have
\begin{equation*}
\int_{\mathbb R\times (-l/2,l/2)}f_u(x,\Psi_\varepsilon)Z_\varepsilon^2dx>0.
\end{equation*}
By Lemma \ref{lem4-2}, $\alpha_\varepsilon=0$ is equivalent to the variational characterization
\begin{equation*}
	W_\varepsilon=\mathcal C_s\lim\limits_{N\to\infty}\sum\limits_{|k|\le N}\frac{-p_1+q_1}{|q-p+kl\boldsymbol e_2|^{4-2s}}+o_\varepsilon(1).
\end{equation*}
Recall the definitions of $p$ and $q$. When $a=0$, the condition for $W_\varepsilon$ is
\begin{equation*}
	W_\varepsilon=\mathcal C_s\lim\limits_{N\to\infty}\sum\limits_{|k|\le N}\frac{2d}{(4d^2+k^2l^2)^{2-s}}+o_\varepsilon(1);
\end{equation*}
while for $a=l/4$, it must hold
\begin{equation*}
	W_\varepsilon=\mathcal C_s\lim\limits_{N\to\infty}\sum\limits_{|k|\le N}\frac{2d}{(4d^2+(kl+\frac{l}{2})^2)^{2-s}}+o_\varepsilon(1).
\end{equation*}
By \eqref{2-3} and the periodic setting, the weak convergence of solutions is obvious. By the choice of $p,q$, the existence of $W_\varepsilon$ follows and the corresponding $\alpha_\varepsilon=0$. The $C^1$ property of $\vartheta_{0,\varepsilon}$ can be deduced from the standard  regularity theory for elliptic equations. Hence we have completed the proof.
\qed

\begin{remark}
	In \cite{Ao}, the condition $\alpha_\varepsilon=0$ is described as $\psi_\varepsilon$ being a critical point of some energy functional $\mathcal E(\psi)$, which degenerates in $x_2$ direction.  Since the vortex pair constructed in \cite{Ao} has an odd symmetry, this description is appropriate and vivid. In our situation, vortices on different sides of the street have an different energy blow up rate if $\delta(\varepsilon)=o_\varepsilon(1)$. However, our description for $\alpha_\varepsilon=0$ does make sense, because assumption \textbf{(H)} ensures that the small terms caused by $R_\varepsilon$ or $\mathbb L_\varepsilon\omega_\varepsilon$ are of order $o_\varepsilon(1)$, and can not exceed the secondary term in energy functional, which is of order $O_\varepsilon(1)$ and determines $W_\varepsilon$.
\end{remark}

\section{Construction for the Euler equation}

In this section we consider the remaining case $s = 1$ for gSQG equation, namely the Euler equation and give proofs for Theorem \ref{thm1} and  \ref{thm2} in this case.

\subsection{Approximate solutions}
 As we have done in Section 2, we are going to obtain a series of $l$-symmetric solutions to the following semilinear elliptic problem
\begin{equation}\label{5-1}
	\begin{cases}
	-\Delta\psi=\varepsilon^{-2}\left(\psi+W_\varepsilon x_1-\frac{\lambda_+}{2\pi}\ln\frac{1}{\varepsilon}\right)_+^{\gamma_1}\chi_{B_r(p)}\\
	 \ \ \ \ \ \ \ \ \ \ \ -\sigma(\varepsilon)^{-2}\left(-\psi-W_\varepsilon x_1-\frac{\lambda_-}{2\pi}\ln\frac{1}{\sigma(\varepsilon)}\right)_+^{\gamma_2}\chi_{B_r(q)} \ \ \ \ \ \text{in} \ \ \mathbb R\times (-l/2,l/2),\\
	\psi(x)\to 0 \ \ \ \text{as} \ \  |x_1|\to \infty,
    \end{cases}
\end{equation}
where $\lambda_+$ and $\lambda_-$ are undetermined parameters and will be suitably chosen, $W_\varepsilon$ is the travelling speed of K\'arm\'an vortex street determined by location of $p$, $q$ and $\varepsilon$, $1<\gamma_1,\gamma_2<\infty$, $\sigma(\varepsilon)$ satisfies assumption \textbf{(H)} with $\tau=\min\{\gamma_2,2\}$, and $r>0$ is a small constant such that $B_r(p)$ and $B_r(q)$ are disjoint. We still assume
\begin{equation*}
	p=(-d, -a), \ \ \  q=(d, a),
\end{equation*}
where $d>0$ for $a=0$; or $d\ge 0$ for $a=l/4$.

Suppose $V(x)=V(|x|)$ is the unique radial solution of
\begin{equation}\label{5-2}
	-\Delta V=V^\gamma, \ \ V\in H^1_0(B_1(0)), \ \ V>0 \ \ \text{in} \ B_1(0).
\end{equation}
For $s_+,s_-$ undetermined, let
\begin{equation*}
	V_{1,\varepsilon}(x)=\left\{
	\begin{array}{lll}
		\frac{1}{2\pi}\ln\frac{1}{\varepsilon}+\varepsilon^{\frac{2}{\gamma_1-1}}s_+^{-\frac{2}{\gamma_1-1}}V_1(\frac{|x-p|}{s_+}) \ \ \ \ \ & |x-p|\le s_+,\\
		\frac{1}{2\pi}\ln\frac{1}{\varepsilon}\cdot\frac{\ln |x-p|}{\ln s_+} & |x-p|>s_+,
	\end{array}
	\right.
\end{equation*}
where $V_1$ is the solution to \eqref{5-2} with exponent $\gamma=\gamma_1$, and \begin{equation*}
	V_{2,\varepsilon}(x)=\left\{
	\begin{array}{lll}
		\frac{1}{2\pi}\ln\frac{1}{\sigma}+\sigma^{\frac{2}{\gamma_2-1}}s_-^{-\frac{2}{\gamma_2-1}}V_2(\frac{|x-q|}{s_-}) \ \ \ \ \ & |x-q|\le s_-,\\
		\frac{1}{2\pi}\ln\frac{1}{\sigma}\cdot\frac{\ln |x-q|}{\ln s_-} & |x-q|>s_-,
	\end{array}
	\right.
\end{equation*}
where $V_2$ is solution to \eqref{5-2} with exponent $\gamma=\gamma_2$. To make $V_{1,\varepsilon},V_{2,\varepsilon}(x)\in C^1$, we need to choose $s_+, s_-$ such that
\begin{equation*}
	\varepsilon^{\frac{2}{\gamma_1-1}}s_+^{-\frac{2}{\gamma_1-1}}|V_1'(1)|=\frac{1}{2\pi}\frac{|\ln \varepsilon|}{|\ln s_+|}, \ \ \ \sigma^{\frac{2}{\gamma_2-1}}s_-^{-\frac{2}{\gamma_2-1}}|V_2'(1)|=\frac{1}{2\pi}\frac{|\ln \sigma|}{|\ln s_-|},
\end{equation*}
which is equivalent to
\begin{equation*}
	s_+=\mu_+\varepsilon, \ \ \ \ \ s_-=\mu_-\sigma
\end{equation*}
for some constants $\mu_+,\mu_->0$. Then a suitable approximate solution to \eqref{5-1} is
\begin{equation}\label{5-3}
	\Psi_\varepsilon(x)=\sum\limits_{k\in \mathbb Z}V_{1,\varepsilon}(x+kl\boldsymbol e_2)-\sum\limits_{k\in\mathbb Z}V_{2,\varepsilon}(x+kl\boldsymbol e_2)
\end{equation}
for $x\in \mathbb R\times (-l/2,l/2)$. Similar to \eqref{2-5}, to make \eqref{5-3} convergent, we assume the sum is understood in the sense
\begin{equation*}
	\begin{split}
		\Psi_\varepsilon&(x)=V_{1,\varepsilon}(x)-V_{2,\varepsilon}(x)\\
		&+\lim\limits_{N\to\infty}\sum\limits_{k=1}^N\left(\sum\limits_{m=\pm k}V_{1,\varepsilon}(x+ml\boldsymbol e_2)-\sum\limits_{m=\pm k}V_{2,\varepsilon}(x+ml\boldsymbol e_2)\right).
	\end{split}
\end{equation*} 
From \eqref{5-1}, for $x\in B_r(p)$ we have
\begin{equation*}
	\begin{split}
		&-\Delta\Psi_\varepsilon-\varepsilon^{-2}\left(\Psi_\varepsilon+W_\varepsilon x_1-\frac{\lambda_+}{2\pi}\ln\frac{1}{\varepsilon}\right)_+^{\gamma_1}\chi_{B_r(p)}\\
		& \ \ \ \ \ \ \ \ \ \ \ \ +\sigma^{-2}\left(-\Psi_\varepsilon-W_\varepsilon x_1-\frac{\lambda_-}{2\pi}\ln\frac{1}{\sigma}\right)_+^{\gamma_2}\chi_{B_r(q)}\\
		&=\varepsilon^{-2}\bigg( \left(V_{1,\varepsilon}(x)-\frac{1}{2\pi}\ln\frac{1}{\varepsilon}\right)_+^{\gamma_1}\\
		& \ \ \ \ \ \ \ -\left(\sum\limits_{k\in\mathbb Z}V_{1,\varepsilon}(x+kl\boldsymbol e_2)-\sum\limits_{k\in\mathbb Z}V_{2,\varepsilon}(x+kl\boldsymbol e_2)+W_\varepsilon x_1-\frac{\lambda_+}{2\pi}\ln\frac{1}{\varepsilon}\right)_+^{\gamma_1}\bigg).
	\end{split}
\end{equation*}
Similarly, for $x\in B_r(q)$ it holds
\begin{equation*}
	\begin{split}
		&-\Delta\Psi_\varepsilon-\varepsilon^{-2}\left(\Psi_\varepsilon+W_\varepsilon x_1-\frac{\lambda_+}{2\pi}\ln\frac{1}{\varepsilon}\right)_+^{\gamma_1}\chi_{B_r(p)}\\
		& \ \ \ \ \ \ \ \ \ \ \ \ +\sigma^{-2}\left(-\Psi_\varepsilon-W_\varepsilon x_1-\frac{\lambda_-}{2\pi}\ln\frac{1}{\sigma}\right)_+^{\gamma_2}\chi_{B_r(q)}\\
		&=\sigma^{-2}\bigg(-\left(V_{2,\varepsilon}(x)-\frac{1}{2\pi}\ln\frac{1}{\sigma}\right)_+^{\gamma_2} \\
		& \ \ \ \ \ \ \ +\left(\sum\limits_{k\in\mathbb Z}V_{2,\varepsilon}(x+kl\boldsymbol e_2)-\sum\limits_{k\in\mathbb Z}V_{1,\varepsilon}(x+kl\boldsymbol e_2)-W_\varepsilon x_1-\frac{\lambda_-}{2\pi}\ln\frac{1}{\sigma}\right)_+^{\gamma_2}\bigg).
	\end{split}
\end{equation*}
To ensure that $\Psi_\varepsilon(x)$ is a good approximation of the solution to \eqref{5-1}, we choose $\lambda_+$ and $\lambda_-$ such that
\begin{equation}\label{5-4}
	\begin{split}
		\lim\limits_{N\to\infty}\sum\limits_{k=1}^N\bigg(\sum\limits_{m=\pm k}V_{1,\varepsilon}(p+ml\boldsymbol e_2)&-\sum\limits_{m=\pm k}V_{2,\varepsilon}(p+ml\boldsymbol e_2)\bigg)\\
		&-V_{2,\varepsilon}(p)-W_\varepsilon d-\frac{\lambda_+}{2\pi}\ln\frac{1}{\varepsilon}=-\frac{1}{2\pi}\ln\frac{1}{\varepsilon},
	\end{split}
\end{equation}
and
\begin{equation}\label{5-5}
	\begin{split}
		\lim\limits_{N\to\infty}\sum\limits_{k=1}^N\bigg(\sum\limits_{m=\pm k}V_{2,\varepsilon}(q+ml\boldsymbol e_2)&-\sum\limits_{m=\pm k}V_{1,\varepsilon}(q+ml\boldsymbol e_2)\bigg)\\
		&-V_{1,\varepsilon}(q)-W_\varepsilon d-\frac{\lambda_-}{2\pi}\ln\frac{1}{\sigma}=-\frac{1}{2\pi}\ln\frac{1}{\sigma}.
	\end{split}
\end{equation}
Notice that the sums in the above two equalities are convergent, since as $|kl|\to \infty$,
\begin{equation*}
	\sum\limits_{m=\pm k}V_{1,\varepsilon}(p+ml\boldsymbol e_2)-\sum\limits_{m=\pm k}V_{2,\varepsilon}(p+ml\boldsymbol e_2)\thickapprox C|kl|^{-2},
\end{equation*}
and
\begin{equation*}
	\sum\limits_{m=\pm k}V_{2,\varepsilon}(q+ml\boldsymbol e_2)-\sum\limits_{m=\pm k}V_{1,\varepsilon}(q+ml\boldsymbol e_2)\thickapprox C|kl|^{-2}.
\end{equation*}
 By \eqref{5-4} and \eqref{5-5}, for $\lambda_+$ and $\lambda_-$ we have the following asymptotic estimate
\begin{equation*}
	\lambda_+=1+O(\frac{\varepsilon}{|\ln \varepsilon|} ), \ \ \  \lambda_-=1+O(\frac{\sigma}{|\ln\sigma|}).
\end{equation*}
Using Pohozaev identity $\int_{B_1(0)}V^p=2\pi|V'(1)|$, one can easily verify that it holds in the sense of measure that as $\varepsilon\to 0$,
\begin{equation*}
	-\Delta\Psi_\varepsilon(x)\rightharpoonup \boldsymbol\delta_p(x)-\boldsymbol\delta_q(x)  \ \ \ \ \ \text{in} \ \ \mathbb R\times (-l/2,l/2).
\end{equation*}
 The error of the approximation by $\Psi_\varepsilon$ is
\begin{equation}\label{5-6}
	\begin{split}
		&-\Delta\Psi_\varepsilon-\varepsilon^{-2}\left(\Psi_\varepsilon+W_\varepsilon x_1-\frac{\lambda_+}{2\pi}\ln\frac{1}{\varepsilon}\right)_+^{\gamma_1}\chi_{B_r(p)}\\
		& \ \ \ \ \ \ \ +\sigma^{-2}\left(-\Psi_\varepsilon-W_\varepsilon x_1-\frac{\lambda_-}{2\pi}\ln\frac{1}{\sigma}\right)_+^{\gamma_2}\chi_{B_r(q)}\\
		&=O(\varepsilon^{-1})\chi_{B_{L\varepsilon}(p)}+O(\sigma^{-1})\chi_{B_{L\sigma}(q)},
	\end{split}
\end{equation}
where $L>0$ is some large constant.

Similarly to the case $0<s<1$, the desirable $l$-symmetric solutions to \eqref{5-1} has the form
\begin{equation*}
	\psi_\varepsilon(x)=\Psi_\varepsilon(x)+\omega_\varepsilon(x),
\end{equation*}
with $x\in \mathbb{R}\times (-l/2,l/2)$, and $\omega_\varepsilon(x)$ a family of $l$-symmetric perturbation terms. Hence we are going to study the equation for $\omega_\varepsilon(x)$.

\subsection{The linear theory}
The linearized operator of \eqref{5-1} at $\Psi_\varepsilon(x)$ is
\begin{equation}\label{6-1}
	\mathbb L_\varepsilon w=-\Delta w-f_u(x,\Psi_\varepsilon) w \ \ \ \ \  \text{in} \ \ \mathbb R\times (-l/2,l/2),
\end{equation}
where $f(x,u)$ is the (nonlinear) function in the right hand side of \eqref{5-1} with $\psi$ being replaced by $u$. So
\begin{equation}\label{6-2}
	\begin{split}
		f_u(x,\Psi_\varepsilon)&=\varepsilon^{-2}\gamma_1\left(\Psi_\varepsilon+W_\varepsilon x_1-\frac{\lambda_+}{2\pi}\ln\frac{1}{\varepsilon}\right)_+^{\gamma_1-1}\chi_{B_r(p)}\\
		& \ \ \ +\sigma(\varepsilon)^{-2}\gamma_2\left(-\Psi_\varepsilon-W_\varepsilon x_1-\frac{\lambda_-}{2\pi}\ln\frac{1}{\sigma(\varepsilon)}\right)_+^{\gamma_2-1}\chi_{B_r(q)}.
	\end{split}
\end{equation}
Hence we can write \eqref{5-1} as
\begin{equation}\label{6-3}
	\mathbb L_\varepsilon\omega_\varepsilon=-E_\varepsilon+R_\varepsilon(\omega_\varepsilon) \ \ \ \ \ \ \  \text{in} \ \ \mathbb R\times (-l/2,l/2),
\end{equation}
where
\begin{equation*}
	E_\varepsilon=-\Delta\Psi_\varepsilon-f(x,\Psi_\varepsilon)
\end{equation*}
and
\begin{equation*}
	R_\varepsilon(\omega_\varepsilon)=f(x,\Psi_\varepsilon+\omega_\varepsilon)-f(x,\Psi_\varepsilon)-f_u(x,\Psi_\varepsilon)\omega_\varepsilon.
\end{equation*}

 Let
\begin{equation*}
	\tilde V=\left\{
	\begin{array}{lll}
		V(|x|) & \text{if} & |x|\le 1,\\
		|V'(1)|\ln\frac{1}{|x|} & \text{if} & |x|>1,
	\end{array}
	\right.
\end{equation*}
where $V$ is the unique solution of \eqref{5-2}. Then the locally linearized operator for our problem is
\begin{equation*}
	\mathbb L_0 w=-\Delta w-\gamma\tilde V_+^{\gamma-1}w \ \ \ \ \  \text{in} \ \ \mathbb R^2.
\end{equation*}
The following nondegeneracy theorem can be found in \cite{Cao5,FW}:
\begin{theorem}\label{thm6-1}
	If $\varphi$ is in the kernel of $\mathbb L_0$, then $\varphi$ is a linear combination of $\frac{\partial \tilde V}{\partial x_1}$ and $\frac{\partial \tilde V}{\partial x_2}$.
\end{theorem}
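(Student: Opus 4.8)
The plan is to reduce this PDE kernel problem to a family of ordinary differential equations by expanding $\varphi$ in angular Fourier modes. Writing $\varphi(r,\theta)$ in polar coordinates and using that the potential $\gamma\tilde V_+^{\gamma-1}$ is radial, $\mathbb L_0$ preserves each mode $1,\cos k\theta,\sin k\theta$, so it suffices to show, for every $k\ge0$, that the only solution $\phi=\phi_k(r)$ of
\begin{equation*}
	\mathcal L_k\phi:=-\phi''-\tfrac{1}{r}\phi'+\tfrac{k^2}{r^2}\phi-\gamma\tilde V_+^{\gamma-1}\phi=0\qquad\text{on }(0,\infty)
\end{equation*}
that stays bounded at $r=0$ and tends to $0$ as $r\to\infty$ is a multiple of $\tilde V'$ when $k=1$ and vanishes identically otherwise. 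Since $\tilde V'(r)\cos\theta=\partial_{x_1}\tilde V$ and $\tilde V'(r)\sin\theta=\partial_{x_2}\tilde V$, this yields the theorem.

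First I would read off two explicit solutions from the invariances of $-\Delta u=u_+^\gamma$, the equation that $\tilde V$ solves in $\mathbb R^2$ (the $C^1$ matching at $|x|=1$ makes this a bona fide solution, and $\tilde V_+^\gamma$ is continuous because $\gamma>1$). Translation invariance yields $\mathbb L_0(\partial_{x_i}\tilde V)=0$, so $\tilde V'$ solves $\mathcal L_1\phi=0$; because $(rV')'=-rV^\gamma<0$ with $rV'\to0$ at the origin, $\tilde V$ is radially strictly decreasing, hence $\tilde V'<0$ never vanishes, the companion solution of $\mathcal L_1\phi=0$ is singular like $r^{-1}$ at $0$, and the bounded solutions are exactly the multiples of $\tilde V'$, which decay like $1/r$. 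Dilation invariance $\tilde V_\mu(x)=\mu^{-2/(\gamma-1)}\tilde V(x/\mu)$ yields $\mathbb L_0Z_0=0$ for the radial function $Z_0=\tfrac{2}{\gamma-1}\tilde V+r\tilde V'$; for $r>1$ one finds $Z_0(r)=-|V'(1)|\bigl(\tfrac{2}{\gamma-1}\ln r+1\bigr)$, so $Z_0$ is unbounded at infinity, while the companion solution of $\mathcal L_0\phi=0$ is unbounded (like $\ln r$) at $0$. Hence no nonzero solution of $\mathcal L_0\phi=0$ is simultaneously bounded at $0$ and decaying, and the $k=0$ mode is trivial.

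The modes $k\ge2$ are the main obstacle; here the plan is a Wronskian comparison against the $k=1$ solution $\psi:=-\tilde V'>0$. Suppose $\phi_k$ solves $\mathcal L_k\phi=0$, is bounded at $0$ (so $\phi_k\sim cr^k$, say with $c>0$) and decays. Writing both equations as $(r\phi')'=\bigl(\tfrac{k^2}{r}-\gamma r\tilde V_+^{\gamma-1}\bigr)\phi$ and setting $W(r):=r(\psi'\phi_k-\phi_k'\psi)$, one gets $W(0^+)=0$ and
\begin{equation*}
	W'(r)=\frac{1-k^2}{r}\,\phi_k(r)\,\psi(r).
\end{equation*}
If $\phi_k$ had a first zero $r_0$, then $W'<0$ on $(0,r_0)$ forces $W(r_0)<0$, while $W(r_0)=-r_0\phi_k'(r_0)\psi(r_0)>0$ since $\phi_k'(r_0)<0$ --- a contradiction; thus $\phi_k>0$ on $(0,\infty)$, so $W<0$ throughout. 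For $r>1$ one has $\tilde V_+\equiv0$, whence decay forces $\phi_k(r)=Br^{-k}$ with $B>0$, and then $W(r)=|V'(1)|B(k-1)r^{-k-1}>0$ for $k\ge2$, contradicting $W<0$. Therefore $\phi_k\equiv0$ for all $k\ge2$, and combining the three cases finishes the proof. The care needed in a full write-up lies in justifying the mode-by-mode splitting, the indicial behaviour of solutions at $r=0$, and the decay bookkeeping at infinity --- precisely the points worked out in \cite{Cao5,FW}.
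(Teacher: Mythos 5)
The paper offers no proof of this theorem---it is quoted from \cite{Cao5,FW}---so your proposal is supplying the argument rather than paralleling one, and what you propose is essentially the standard proof found in those references: angular Fourier decomposition, identification of the $k=1$ kernel with $\tilde V'$ via translation invariance, elimination of $k=0$ via the dilation generator $\frac{2}{\gamma-1}\tilde V+r\tilde V'$ (whose value $-|V'(1)|(\frac{2}{\gamma-1}\ln r+1)$ for $r>1$ you compute correctly), and a Wronskian comparison against $\psi=-\tilde V'>0$ for $k\ge 2$. The key steps check out: $(rV')'=-rV^\gamma<0$ with $rV'\to 0$ does give $\tilde V'<0$ on $(0,\infty)$; $W'=\frac{1-k^2}{r}\phi_k\psi$ is correct and $W\sim r^{k+1}\to 0$ at the origin; the sign clash at a first zero ($W(r_0)=-r_0\phi_k'(r_0)\psi(r_0)>0$ versus $W(r_0)<0$) rules out sign changes; and for $r>1$, where $\tilde V_+\equiv 0$, the values $\phi_k=Br^{-k}$ and $\psi=|V'(1)|/r$ give $W=|V'(1)|B(k-1)r^{-k-1}>0$, the desired contradiction. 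One point you should make explicit in a write-up: as literally stated the theorem has no hypothesis at infinity, and without one it is false---for each $k$ the mode that is regular at the origin generically behaves like $Ar^k+Br^{-k}$ outside $B_1$ (where $\mathbb L_0=-\Delta$) with $A\ne 0$, producing unbounded kernel elements. Your added assumption that $\varphi$ decays (boundedness would also suffice, since for $k\ge 2$ it forces $A=0$) is exactly the condition under which the statement is true and under which it is used in Section 3, mirroring the decay hypothesis the paper does include in Theorem \ref{thm3-1}. The remaining items you defer---the mode-by-mode splitting, the Frobenius-type indicial analysis at $r=0$ with merely continuous coefficients---are genuinely routine, so I consider the proposal correct.
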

Hence the kernel of $\mathbb L_\varepsilon$ is spanned by
\begin{equation*}
	Z_\varepsilon(x)=Z_{1,\varepsilon}(x)-Z_{2,\varepsilon}(x),
\end{equation*}
where
\begin{equation*}
	Z_{1,\varepsilon}(x)=\sum\limits_{k\in \mathbb Z}\partial_{x_1}V_{1,\varepsilon}(x+kl\boldsymbol e_2), \ \ \  Z_{2,\varepsilon}(x)=\sum\limits_{k\in \mathbb Z}\partial_{x_1}V_{2,\varepsilon}(x+kl\boldsymbol e_2).
\end{equation*}
We will study the following projected linear problem:
\begin{equation}\label{6-4}
	\begin{cases}
		\mathbb L_\varepsilon\omega_\varepsilon=h(x)+\alpha_\varepsilon f_u(x,\Psi_\varepsilon) Z_\varepsilon(x) \ \ \ \text{in} \ \ \mathbb R\times (-l/2,l/2),\\
		\int_{\mathbb R\times (-l/2,l/2)} f_u(x,\Psi_\varepsilon) Z_\varepsilon(x) \omega_\varepsilon(x)dx=0,\\
		\omega_\varepsilon(x)\to 0 \ \ \ \text{as} \ \ |x_1|\to\infty,
	\end{cases}
\end{equation}
where we assume $l$-symmetric $h(x)$ satisfies
\begin{equation}\label{6-5}
	supp(h(x))\subset B_{L\varepsilon}(p)\cup B_{L\sigma}(q)
\end{equation}
for some large constant $L>0$. The norms we will use for the case $s=1$ are
\begin{equation*}
	\|\omega_\varepsilon\|_*=\sup\limits_{x\in R\times (-l/2,l/2)} \rho(x)^{-1}|\omega_\varepsilon(x)|,
\end{equation*}
where
\begin{equation*}
	\rho(x)=\left|\ln\frac{2r}{r+|x-p|}-\ln\frac{2r}{r+|x-q|}\right|+\lim\limits_{N\to\infty}\sum\limits_{k=1}^N\frac{1}{|x+kl\boldsymbol e_2|^2}
\end{equation*}
with $r>0$ the small constant given in \eqref{5-1}, and
\begin{equation*}
	\|h\|_{**}=\sup\limits_{x\in \mathbb R_-\times (-l/2,l/2)}\varepsilon^2 |h(x)|+ \sup\limits_{x\in \mathbb R_+\times (-l/2,l/2)}\sigma^2|h(x)|.
\end{equation*}

The following a priori estimate is the counterpart of Lemma \ref{lem3-1} in Section 2.
\begin{lemma}\label{lem6-1}
	Assume that
$h(x)$ is $l$-symmetric, which satisfies \eqref{6-5} and $\|h\|_{**}<\infty$. Then there exists a small $\varepsilon_0>0$ such that for any $\varepsilon\in (0,\varepsilon_0)$ and solution pair $(\omega_\varepsilon, \alpha_\varepsilon)$ to \eqref{3-4}, and for some constant $C>0$
	\begin{equation}\label{6-6}
		\|\omega_\varepsilon\|_*+(\sigma(\varepsilon))^{-1}|\alpha_\varepsilon|\le C\|h\|_{**}.
	\end{equation}
\end{lemma}
\begin{proof}
	Similar to the proof of Lemma \ref{lem3-1}, we first  prove
	\begin{equation}\label{6-7}
		(\sigma(\varepsilon))^{-1}|\alpha_\varepsilon|\le C(\|h\|_{**}+o_\varepsilon(1)\|\omega_\varepsilon\|_*).
	\end{equation}
	The coefficient $\alpha_\varepsilon$ is given by
	\begin{equation*}
		\alpha_\varepsilon\int_{\mathbb R\times (-l/2,l/2)}f_u(x,\Psi_\varepsilon)Z_\varepsilon^2dx=\int_{\mathbb R\times (-l/2,l/2)}Z_\varepsilon\mathbb L_\varepsilon\omega_\varepsilon dx-\int_{\mathbb R\times (-l/2,l/2)}hZ_\varepsilon dx.
	\end{equation*}
	Notice that $s_+=\varepsilon\mu_+$ and $s_-=\sigma\mu_-$. For the left hand side we have
	\begin{equation}\label{6-8}
		\begin{split}
			\int_{\mathbb R\times (-l/2,l/2)}f_u(x,\Psi_\varepsilon&)Z_\varepsilon^2dx=(1+o_\varepsilon(1))\varepsilon^{-2}\int_{\mathbb R\times (-l/2,l/2)}\gamma_1\left(V_{1,\varepsilon}-\frac{1}{2\pi}\ln\frac{1}{\varepsilon}\right)_+^{\gamma_1-1}\left(\frac{\partial V_{1,\varepsilon}}{\partial y^1_1}\right)^2dy^1\\
			& \ \ \ \ +(1+o_\varepsilon(1))\sigma^{-2}\int_{\mathbb R\times (-l/2,l/2)}\gamma_2\left(V_{2,\varepsilon}-\frac{1}{2\pi}\ln\frac{1}{\sigma}\right)_+^{\gamma_2-1}\left(\frac{\partial V_{2,\varepsilon}}{\partial y^2_1}\right)^2dy^2\\
			& \ \ \ \ =c_1(1+o_\varepsilon(1))\varepsilon^{-2}+c_2(1+o_\varepsilon(1))\sigma^{-2},
		\end{split}
	\end{equation}
	where $y^1=\frac{x}{s_+}$, $y^2=\frac{x}{s_-}$, and $c_1,c_2>0$ are some constants. For the right hand side, it holds
	\begin{equation*}
		\begin{split}
			&\int_{\mathbb R\times (-l/2,l/2)}Z_\varepsilon\mathbb (-\Delta\omega_\varepsilon) dx=\int_{\mathbb R\times (-l/2,l/2)}\omega_\varepsilon (-\Delta Z_\varepsilon) dx\\
			&=\int_{\mathbb R\times (-l/2,l/2)}\omega_\varepsilon\left( \varepsilon^{-2}\gamma_1\left(V_{1,\varepsilon}(x)-\frac{1}{2\pi}\ln\frac{1}{\varepsilon}\right)_+^{\gamma_1-1}Z_{1,\varepsilon}-\sigma^{-2}\gamma_2\left(V_{2,\varepsilon}(x)-\frac{1}{2\pi}\ln\frac{1}{\varepsilon}\right)_+^{\gamma_2-1}Z_{2,\varepsilon}\right)dx.
		\end{split}
	\end{equation*}
	For $x_1<0$, we have
	\begin{equation*}
		\begin{split}
			&\left|\varepsilon^{-2}\gamma_1\left(V_{1,\varepsilon}(x)-\frac{1}{2\pi}\ln\frac{1}{\varepsilon}\right)_+^{\gamma_1-1}Z_{1,\varepsilon}-\sigma^{-2}\gamma_2\left(V_{2,\varepsilon}(x)-\frac{1}{2\pi}\ln\frac{1}{\varepsilon}\right)_+^{\gamma_2-1}Z_{2,\varepsilon}-f_u(x,\Psi_\varepsilon)Z_\varepsilon\right|\\
			&=\left|\varepsilon^{-2}\gamma_1\left(V_{1,\varepsilon}(x)-\frac{1}{2\pi}\ln\frac{1}{\varepsilon}\right)_+^{\gamma_1-1}Z_{1,\varepsilon}-\varepsilon^{-2}\gamma_1\left(V_{1,\varepsilon}(x)-\frac{1}{2\pi}\ln\frac{1}{\varepsilon}+O(\varepsilon)\right)_+^{\gamma_1-1}Z_\varepsilon\right|\\
			&\le C\varepsilon^{-3+\min\{\gamma_1-1,1\}}\chi_{B_{L\varepsilon}(p)}.
		\end{split}
	\end{equation*}
	For $x_1>0$, the term is
	\begin{equation*}
		\begin{split}
			&\left|-\sigma^{-2}\gamma_2\left(V_{2,\varepsilon}(x)-\frac{1}{2\pi}\ln\frac{1}{\sigma}\right)_+^{\gamma_2-1}Z_{2,\varepsilon}+\sigma^{-2}\gamma_2\left(V_{2,\varepsilon}(x)-\frac{1}{2\pi}\ln\frac{1}{\sigma}+O(\sigma)\right)_+^{\gamma_2-1}Z_\varepsilon\right|\\
			&\le C\sigma^{-3+\min\{\gamma_2-1,1\}}\chi_{B_{L\sigma}(q)}.
		\end{split}
	\end{equation*}
    Using H\"older's inequality, we have
	\begin{equation}\label{6-9}
		\left|\int_{\mathbb R\times (-l/2,l/2)}Z_\varepsilon\mathbb L_\varepsilon\omega_\varepsilon dx\right|\le o_\varepsilon(1)\cdot\|\omega_\varepsilon\|_*\sigma^{-1},
	\end{equation}
	and
	\begin{equation}\label{6-10}
		\left|\int_{\mathbb R\times (-l/2,l/2)}hZ_\varepsilon dx\right|\le \|h\|_{**}\sigma^{-1}.
	\end{equation}
	Then \eqref{6-7} is a direct consequence of \eqref{6-8} \eqref{6-9} and \eqref{6-10}.
	
	Then we will prove
	\begin{equation}\label{6-11}
		\|\omega_\varepsilon\|_*\le C\|h\|_{**},
	\end{equation}
	which is achieved by deriving a contradiction. We assume that there exists a sequence $\{\varepsilon_n\}$ satisfying $\varepsilon_n\to 0$, and solution pairs $(\omega_{\varepsilon_n}, \alpha_{\varepsilon_n})$ to \eqref{6-4} for some $h_{\varepsilon_n}$, such that
	\begin{equation}\label{6-12}
		\|\omega_{\varepsilon_n}\|_*=1, \ \ \ \|h_{\varepsilon_n}\|_{**}\to 0 \ \ \ \text{as} \ n\to\infty.
	\end{equation}
	We want to show that for any $L>0$, it always holds as $ n\to\infty$,
	\begin{equation}\label{6-13}
		\|\omega_{\varepsilon_n}\|_{L^\infty(B_{L\varepsilon_n}(p))}+\|\omega_{\varepsilon_n}\|_{L^\infty(B_{L\sigma_n}(q))}\to 0,
	\end{equation}
where $\sigma_n=\sigma(\varepsilon_n)$.
	Indeed, if \eqref{6-13} is false, then we can suppose the first term satisfies for some $\Lambda_0>0$

$$\|\omega_{\varepsilon_n}\|_{L^\infty(B_{L\varepsilon_n}(p))}\ge \Lambda_0>0.$$

Let
	\begin{equation*}
		\tilde \omega_{\varepsilon_n}(y)=\mu_+^{\frac{2}{\gamma_1-1}}\omega_{\varepsilon_n}(s_+ y+p),
	\end{equation*}
then in every compact set $\tilde\omega_{\varepsilon_n}(y)$ satisfies
	\begin{equation*}
		\begin{split}
			-\Delta \tilde\omega_{\varepsilon_n}(y)&-\gamma_1(\tilde V_1+O(\varepsilon_n))_+^{\gamma_1-1}\tilde\omega_{\varepsilon_n}(y)+o_{\varepsilon_n}(1)\\
			&=\varepsilon_n^2\mu_+^{\frac{2\gamma_1}{\gamma_1-1}}h_{\varepsilon_n}(s_+ y+p)+\varepsilon_n^{-1}\alpha_{\varepsilon_n}\gamma_1\big(\tilde V_1+O(\varepsilon_n)\big)_+^{\gamma_1-1}\left(\frac{\partial \tilde V_1}{\partial y_1}+o_{\varepsilon_n}(1)\right),
		\end{split}
	\end{equation*}
	which is equivalent to
	\begin{equation*}
		-\Delta \tilde\omega_{\varepsilon_n}(y)-\gamma_1V_1^{\gamma_1-1}\tilde\omega_{\varepsilon_n}(y)+o_{\varepsilon_n}(1)=\mathcal R_n(y),
	\end{equation*}
	where
	\begin{equation*}
		\mathcal R_n(y)=\varepsilon_n^2\mu_+^{\frac{2}{\gamma_1-1}}h_{\varepsilon_n}(s_+ y+p)+o_{\varepsilon_n}(1)\cdot \tilde\omega_{\varepsilon_n}(y)+\varepsilon_n^{-1}\alpha_{\varepsilon_n}\gamma_1\big(\tilde V_1+o_{\varepsilon_n}(1)\big)_+^{\gamma_1-1}\left(\frac{\partial \tilde V_1}{\partial y_1}+o_{\varepsilon_n}(1)\right).
	\end{equation*}
	Since $\varepsilon_n^2\mu_+^{\frac{2}{\gamma_1-1}}h_{\varepsilon_n}(y)\to 0$, and $\varepsilon_n^{-1}\alpha_{\varepsilon_n}\le \sigma^{-1}\alpha_{\varepsilon_n}\le C(\|h\|_{**}+o_{\varepsilon_n}(1)\|\omega_{\varepsilon_n}\|_*)=o_{\varepsilon_n}(1)$ on every compact set by \eqref{6-7} and \eqref{6-12}, we have $\mathcal R_n(y)\to 0$. Let $n\to \infty$, we may assume that $\tilde\omega_{\varepsilon_n}$ converge uniformly on compact sets to a function $\tilde \omega$ satisfying
	\begin{equation}\label{6-14}
		\|\tilde \omega\|_{L^\infty(B_{L\mu_+^{-1}}(0))}\ge \Lambda_0\mu_+^{\frac{2}{\gamma_1-1}}.
	\end{equation}
	However, $\tilde \omega$ is even in $y_2$ direction, and is a solution to
	\begin{equation*}
		-\Delta \tilde\omega(y)-\gamma_1 V_1^{\gamma_1-1}\tilde\omega(y)=0.
	\end{equation*}
	By \eqref{6-4}, it also satisfies the orthogonality condition
	\begin{equation*}
		\int_{\mathbb{R}^2}\gamma_1 V_1^{\gamma_1-1}\tilde\omega \frac{\partial \tilde V_1}{\partial y_1}dy=0.
	\end{equation*}
	According to Theorem \ref{thm6-1}, it must hold $\tilde \omega\equiv 0$, which is a contradiction to \eqref{6-14}. Hence we deduce that $\|\omega_{\varepsilon_n}\|_{L^\infty(B_{L\varepsilon_n}(p_{\varepsilon_n}))}\to 0$. For the second term in \eqref{6-13}, we can use a similar method to prove $\|\omega_{\varepsilon_n}\|_{L^\infty(B_{L\sigma_n}(q_{\varepsilon_n}))}\to 0$.
	
	In view of \eqref{3-4}, $\omega_{\varepsilon_n}$ satisfies
	\begin{equation*}
		-\Delta\omega_{\varepsilon_n}=f_u(x,\Psi_0)\omega_{\varepsilon_n}+h_{\varepsilon_n}+\alpha_{\varepsilon_n}f_u(x,\Psi_{\varepsilon_n})Z_{\varepsilon_n}.
	\end{equation*}
	So we have
	\begin{equation*}
		\omega_{\varepsilon_n}(x)=\int_{\mathbb R\times (-l/2,l/2)}K_1(x-z)\left(f_u(x,\Psi_0)\omega_{\varepsilon_n}(x)+h_{\varepsilon_n}(x)+\alpha_{\varepsilon_n}f_u(x,\Psi_{\varepsilon_n})Z_{\varepsilon_n}(x)\right)dz,
	\end{equation*}
	where
	\begin{equation*}
		K_1(x)=\sum\limits_{k\in \mathbb Z}G_1(x+kl\boldsymbol e_2).
	\end{equation*}
	This implies
	\begin{equation*}
		\rho(x)^{-1}|\omega_{\varepsilon_n}(x)|\le C\left( \|\omega_{\varepsilon_n}\|_{L^\infty(B_{L\varepsilon_n}(p))}+
        \|\omega_{\varepsilon_n}\|_{L^\infty(B_{L\sigma_n}(q))}+\|h_{\varepsilon_n}\|_{**}+\sigma_n^{-1}\alpha_{\varepsilon_n}\right).
	\end{equation*}
	Now, from \eqref{6-7} \eqref{6-12} \eqref{6-13}, we can obtain $\|\omega_{\varepsilon_n}\|_*\to 0$ as $n\to\infty$, which is a contradiction to \eqref{6-12}. Thus \eqref{6-11} is obvious, and \eqref{6-6} is the consequence of \eqref{6-7} and \eqref{6-11}.
\end{proof}

We have the following lemma for the projective problem \eqref{6-4}.
\begin{lemma}\label{lem6-2}
	Assume that $h(x)$ is $l$-symmetric, which satisfies \eqref{6-5} and $\|h\|_{**}<\infty$. Then there exists a small $\varepsilon_0>0$ such that for any $\varepsilon\in (0,\varepsilon_0)$, \eqref{6-4} has a unique solution $\omega_\varepsilon=T_\varepsilon h$, where $T_\varepsilon$ is a linear operator of $h$. Moreover, there exists a constant $C>0$ independent of $\varepsilon$ such that
	\begin{equation*}
		\|\omega_\varepsilon\|_*\le C\|h\|_{**}.
	\end{equation*}
\end{lemma}
\begin{proof}
	Denote the Hilbert space
	\begin{equation*}
		H:=\left\{ g\in \dot H^1(\mathbb R\times (-l/2,l/2)) \ : \   g \ \text{is} \ l\text{-symmetric}, \ \int_{\mathbb R\times (-l/2,l/2)} f_u(x,\Psi_\varepsilon) Z_\varepsilon(x) g(x) dx=0\right\}
	\end{equation*}
	endowed with the inner product
	\begin{equation*}
		[u,g]=\int_{\mathbb R\times (-l/2,l/2)}\nabla u(x)\nabla g(x)dx.
	\end{equation*}
    Then we can use Riesz's representation theorem and Fredholm's alternative to derive the desired conclusion, just as what we did in the proof of Lemma \ref{lem3-2}.
\end{proof}

\subsection{The reduction}
To solve \eqref{5-1}, we will deal with \eqref{6-4} for
\begin{equation}\label{7-1}
	h(x)=-E_\varepsilon+R_\varepsilon(\omega_\varepsilon).
\end{equation}
Using Lemma \ref{lem6-2} and a contraction mapping argument, the existence and uniqueness for solutions to \eqref{6-4} and \eqref{7-1} can be established in the following lemma. Since it is similar to that of Lemma \ref{lem4-1}, we omit its proof.
\begin{lemma}\label{lem7-1}
	There are $\varepsilon_0>0$ and $r_0>0$ such that for $\varepsilon\in(0,\varepsilon_0)$ there exists a unique solution $\omega_\varepsilon$ to \eqref{6-4} and \eqref{7-1} in the ball $\|\omega_\varepsilon\|_*\le r_0$. Moreover, it holds
	\begin{equation}\label{7-2}
		\|\omega_\varepsilon\|_*\le C\varepsilon
	\end{equation}
	for some constant $C>0$, and $\omega_\varepsilon$ is continuous with respect to $\varepsilon$.
\end{lemma}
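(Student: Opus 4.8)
The plan is to run the $s=1$ version of the fixed-point scheme used for Lemma \ref{lem4-1}. First, I would note that for $h$ given by \eqref{7-1} one checks at once that $h$ is $l$-symmetric with $supp(h)\subset B_{L\varepsilon}(p_\varepsilon)\cup B_{L\sigma}(q_\varepsilon)$, so Lemma \ref{lem6-2} supplies the linear solution operator $T_\varepsilon$ of \eqref{6-4} with $\|T_\varepsilon h\|_*\le C\|h\|_{**}$. Setting
\begin{equation*}
	X:=\{u\in L^\infty(\mathbb R\times(-l/2,l/2))\ :\ u\ \text{is}\ l\text{-symmetric},\ \|u\|_*<\infty\},\qquad \mathcal B_{r_0}:=\{\omega\in X:\|\omega\|_*\le r_0\},
\end{equation*}
and $\mathcal A_\varepsilon\omega:=T_\varepsilon(-E_\varepsilon+R_\varepsilon(\omega))$, solving \eqref{6-4}--\eqref{7-1} is equivalent to finding a fixed point $\mathcal A_\varepsilon\omega_\varepsilon=\omega_\varepsilon$ in $\mathcal B_{r_0}$, which I would obtain from the contraction mapping theorem.

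Next I would estimate the two ingredients in the $\|\cdot\|_{**}$-norm. From \eqref{5-6}, $|E_\varepsilon|\le C\varepsilon^{-1}\chi_{B_{L\varepsilon}(p_\varepsilon)}+C\sigma^{-1}\chi_{B_{L\sigma}(q_\varepsilon)}$, hence $\|E_\varepsilon\|_{**}\le C(\varepsilon+\sigma)\le C\varepsilon$, the last step using $\sigma(\varepsilon)\le C\varepsilon$ from \textbf{(H)}. For the remainder, I would split $R_\varepsilon(\omega)=R_{1,\varepsilon}(\omega)+R_{2,\varepsilon}(\omega)$ exactly as in Section 2 and insert the choices \eqref{5-4}--\eqref{5-5} of $\lambda_+$ and $\lambda_-$; then $R_{1,\varepsilon}$ (carrying the prefactor $\varepsilon^{-2}$) is, up to an $O(\varepsilon)$ shift inside the power, the quadratic Taylor remainder of $t\mapsto(V_1(\cdot)-\mathrm{const})_+^{\gamma_1}$ at the increment $\omega$, and similarly for $R_{2,\varepsilon}$ with $\varepsilon^{-2}$ replaced by $\sigma^{-2}$. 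Since $\rho(x)$ is bounded on $B_{L\varepsilon}(p_\varepsilon)$ and on $B_{L\sigma}(q_\varepsilon)$ (the logarithmic differences and the tail series in its definition are $O(1)$ there), one has $|\omega|\le C\|\omega\|_*$ on those balls, and multiplying $R_{1,\varepsilon}$ by $\varepsilon^2$ and $R_{2,\varepsilon}$ by $\sigma^2$ cancels the respective prefactors. This should reproduce the bounds $\|R_\varepsilon(\omega)\|_{**}\le C\|\omega\|_*^{\min(\gamma_1,\gamma_2,2)}$ and $\|R_\varepsilon(\omega^1)-R_\varepsilon(\omega^2)\|_{**}\le C\bigl(\|\omega^1\|_*^{\theta}+\|\omega^2\|_*^{\theta}\bigr)\|\omega^1-\omega^2\|_*$ with $\theta:=\min(\gamma_1-1,\gamma_2-1,1)>0$, just as in \eqref{4-5}.

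From here the argument closes as for Lemma \ref{lem4-1}: combining with $\|T_\varepsilon h\|_*\le C\|h\|_{**}$ gives $\|\mathcal A_\varepsilon\omega\|_*\le C\varepsilon+Cr_0^{\min(\gamma_1,\gamma_2,2)}$ and $\|\mathcal A_\varepsilon\omega^1-\mathcal A_\varepsilon\omega^2\|_*\le Cr_0^{\theta}\|\omega^1-\omega^2\|_*$, so for $r_0$ and then $\varepsilon_0$ small enough $\mathcal A_\varepsilon$ is a contraction of $\mathcal B_{r_0}$ into itself and has a unique fixed point $\omega_\varepsilon$ there. Plugging this back, $\|\omega_\varepsilon\|_*\le C\|E_\varepsilon\|_{**}+C\|\omega_\varepsilon\|_*^{\min(\gamma_1,\gamma_2,2)}$, and since $\min(\gamma_1,\gamma_2,2)>1$ and $\|\omega_\varepsilon\|_*\le r_0$ is small, the last term is absorbed, which yields \eqref{7-2}. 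Continuity of $\omega_\varepsilon$ in $\varepsilon$ follows from the uniform contraction principle once one observes that $E_\varepsilon$ and $R_\varepsilon(\cdot)$ depend continuously on $\varepsilon$. I expect the only genuinely delicate point to be the bound $\|E_\varepsilon\|_{**}\le C\varepsilon$: it hinges on the error estimate \eqref{5-6}, whose proof must use the cancellation of leading logarithms between each positive vortex and its negative partner to control the divergent kernel $G_1$, and must track the relative sizes of $\varepsilon$ and $\sigma(\varepsilon)$ through \textbf{(H)}.
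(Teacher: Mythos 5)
Your proposal is correct and follows essentially the same route the paper intends: the authors omit the proof of Lemma \ref{lem7-1} precisely because it repeats the contraction-mapping argument of Lemma \ref{lem4-1}, with the error bound $\|E_\varepsilon\|_{**}\le C\varepsilon$ coming from \eqref{5-6} together with $\sigma(\varepsilon)\le C\varepsilon$ from \textbf{(H)}, and the remainder estimates for $R_\varepsilon$ obtained exactly as in \eqref{4-5}. Your identification of \eqref{5-6} (the logarithmic cancellation between paired vortices) as the one genuinely delicate input is also consistent with how the paper organizes the argument.
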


We can easily verify that $\psi_\varepsilon(x)=\Psi_\varepsilon(x)+\omega_\varepsilon(x)$ satisfies
\begin{equation}\label{7-3}
		-\Delta\psi_\varepsilon=f(x,\psi_\varepsilon)+\alpha_\varepsilon f_u(x,\Psi_\varepsilon) Z_\varepsilon(x) \ \ \ \ \  \text{in} \ \ \mathbb R\times (-l/2,l/2).
\end{equation}
To eliminate the second term on the right hand side, we multiply \eqref{7-3} by $Z_\varepsilon$ and integrate over $\mathbb R\times (-l/2,l/2)$ to get
\begin{equation*}
	\alpha_\varepsilon\int_{\mathbb R\times (-l/2,l/2)}f_u(x,\Psi_\varepsilon)Z_\varepsilon^2dx=\int_{\mathbb R\times (-l/2,l/2)}\left(-\Delta\psi_\varepsilon-f(x,\psi_\varepsilon)\right)Z_\varepsilon dx.
\end{equation*}
The following lemma gives a precise condition to ensure that $\alpha_\varepsilon=0$.
\begin{lemma}\label{lem7-2}
	It holds
	\begin{equation*}
		\int_{\mathbb R\times (-l/2,l/2)}\left(-\Delta\psi_\varepsilon-f(x,\psi_\varepsilon)\right)Z_\varepsilon dx=C\left(\frac{1}{2\pi}\lim\limits_{N\to\infty}\sum\limits_{|k|\le N}\frac{p_1-q_1}{|q-p+kl\boldsymbol e_2|^2}+W_\varepsilon\right)+o_\varepsilon(1),
	\end{equation*}
	where $C>0$ is a constant independent of $\varepsilon$, and $p_1,q_1$ denote the first coordinates of $p,q$.
respectively.
\end{lemma}
\begin{proof}
	By \eqref{6-4} and \eqref{7-1}, we have
	\begin{equation*}
		-\Delta\psi_\varepsilon-f(x,\psi_\varepsilon)=\mathbb{L}_\varepsilon\psi_\varepsilon+E_\varepsilon-R_\varepsilon(\omega_\varepsilon)  \ \ \ \ \ \ \ \ \ \text{in} \ \ \mathbb R\times (-l/2,l/2).
	\end{equation*}
	Multiplying this equality by $Z_\varepsilon$ and integrating over $\mathbb R\times (-l/2,l/2)$, we obtain
	\begin{equation*}
		\begin{split}
			\int_{\mathbb R\times (-l/2,l/2)}\big(-\Delta\psi_\varepsilon&-f(x,\psi_\varepsilon)\big)Z_\varepsilon dx\\
			&=\int_{\mathbb R\times (-l/2,l/2)}\mathbb{L}_\varepsilon\omega_\varepsilon Z_\varepsilon dx+\int_{\mathbb R\times (-l/2,l/2)}E_\varepsilon Z_\varepsilon dx-\int_{\mathbb R\times (-l/2,l/2)}R_\varepsilon(\omega_\varepsilon)Z_\varepsilon dx.
		\end{split}
	\end{equation*}
	
	By the choice of $\lambda_+$ and $\lambda_-$ in \eqref{5-4} and \eqref{5-5}, $E_\varepsilon$ can be split into
	\begin{equation*}
		E_\varepsilon=E_{1,\varepsilon}+E_{2,\varepsilon},
	\end{equation*}
	where
	\begin{equation*}
		\begin{split}
			E_{1,\varepsilon}&=\varepsilon^{-2}\chi_{B_{L\varepsilon}(p)}\bigg(\left(V_{1,\varepsilon}(x)-\frac{1}{2\pi}\ln\frac{1}{\varepsilon}\right)^{\gamma_1}_+-\bigg(V_{1,\varepsilon}(x)-\frac{1}{2\pi}\ln\frac{1}{\varepsilon}\\
			&+\sum\limits_{k\neq0}V_{1,\varepsilon}(x+kl\boldsymbol e_2)-\sum\limits_{k\in\mathbb Z}V_{2,\varepsilon}(x+kl\boldsymbol e_2)-\sum\limits_{k\neq0}V_{1,\varepsilon}(p+kl\boldsymbol e_2)\\
			&+\sum\limits_{k\in\mathbb Z}V_{2,\varepsilon}(p+kl\boldsymbol e_2)
			+W_\varepsilon(x_1-d)\bigg)^{\gamma_1}_+\bigg),
		\end{split}
	\end{equation*}
	and
	\begin{equation*}
		\begin{split}
			E_{2,\varepsilon}&=\sigma^{-2}\chi_{B_{L\sigma}(q)}\bigg(-\bigg(V_{2,\varepsilon}(x)-\frac{1}{2\pi}\ln\frac{1}{\sigma}\bigg)^{\gamma_2}_++\bigg(V_{2,\varepsilon}(x)-\frac{1}{2\pi}\ln\frac{1}{\sigma}\\
			&+\sum\limits_{k\neq0}V_{2,\varepsilon}(x+kl\boldsymbol e_2)-\sum\limits_{k\in\mathbb Z}V_{1,\varepsilon}(x+kl\boldsymbol e_2)-\sum\limits_{k\neq0}V_{2,\varepsilon}(q+kl\boldsymbol e_2)\\
			&+\sum\limits_{k\in\mathbb Z}V_{1,\varepsilon}(q+kl\boldsymbol e_2)-W_\varepsilon(x_1-d)\bigg)^{\gamma_2}_+\bigg).
		\end{split}
	\end{equation*}	
	Since
	\begin{equation*}
		|E_{1,\varepsilon}|\le C\varepsilon^{-2}\cdot\varepsilon \ \ \  \text{and} \ \ \ |E_{2,\varepsilon}|\le C\sigma^{-2}\cdot\sigma,
	\end{equation*}
	It holds
	\begin{equation*}
		\int_{\mathbb R\times (-l/2,l/2)}E_{1,\varepsilon} Z_{2,\varepsilon} dx\le C\varepsilon, \ \ \  \int_{\mathbb R\times (-l/2,l/2)}E_{2,\varepsilon} Z_{1,\varepsilon} dx\le C\sigma.
	\end{equation*}
	By Taylor's formula, we have
	\begin{equation*}
		\begin{split}
			E_{1,\varepsilon}=&-\varepsilon^{-2}\chi_{B_{L\varepsilon}(p)}\left(V_{1,\varepsilon}(x)-\frac{1}{2\pi}\ln\frac{1}{\varepsilon}\right)^{\gamma_1-1}_+\bigg(\sum\limits_{k\neq0}V_{1,\varepsilon}(x+kl\boldsymbol e_2)\\
			&-\sum\limits_{k\in\mathbb Z}V_{2,\varepsilon}(x+kl\boldsymbol e_2)-\sum\limits_{k\neq0}V_{1,\varepsilon}(p+kl\mathbf e_2)+\sum\limits_{k\in\mathbb Z}V_{2,\varepsilon}(p+kl\boldsymbol e_2)\\
			&+W_\varepsilon(x_1-d)\bigg) +\varepsilon^{-2}\cdot\varepsilon^{\gamma_1-1}\chi_{B_{L\varepsilon}(p)}.
		\end{split}
	\end{equation*}
	We then integrate by parts and use the asymptotic behavior of $V_{1,\varepsilon}, V_{2,\varepsilon}$ to obtain
	\begin{equation*}
		\begin{split}
			\int_{\mathbb R\times (-l/2,l/2)}&E_{1,\varepsilon} Z_{1,\varepsilon} dx=\varepsilon^{-2}\int_{\mathbb R\times (-l/2,l/2)}\left(V_{1,\varepsilon}(x)-\frac{1}{2\pi}\ln\frac{1}{\varepsilon}\right)^{\gamma_1}_+\\
			& \ \ \ \ \ \ \ \ \ \ \left(\sum\limits_{k\neq0}\partial_{x_1}V_{1,\varepsilon}(x+kl\boldsymbol e_2)
			-\sum\limits_{k\in\mathbb Z}\partial_{x_1}V_{2,\varepsilon}(x+kl\boldsymbol e_2)+W_\varepsilon+O(\varepsilon)\right)dx\\
			&=\varepsilon^{-2}\int_{\mathbb R\times (-l/2,l/2)}\left(V_{1,\varepsilon}(x)-\frac{1}{2\pi}\ln\frac{1}{\varepsilon}\right)^{\gamma_1}_+\\
			& \ \ \ \ \ \ \ \ \ \ \left(\frac{1}{2\pi}\lim\limits_{N\to\infty}\sum\limits_{|k|\le N}\frac{p_1-q_1}{|p-q+kl\boldsymbol e_2|^2}+W_\varepsilon+O(\varepsilon)\right)dx\\
			&=C_1\left(\frac{1}{2\pi}\lim\limits_{N\to\infty}\sum\limits_{|k|\le N}\frac{p_1-q_1}{|p-q+kl\boldsymbol e_2|^2}+W_\varepsilon\right)+O(\varepsilon),	
		\end{split}
	\end{equation*}
	where $p_1,q_1$ denotes the first coordinates of $p,q$. Similarly, we can obtain
	\begin{equation*}
		\begin{split}
			-\int_{\mathbb R\times (-l/2,l/2)}E_{2,\varepsilon} Z_{2,\varepsilon} dx=C_2\left(\frac{1}{2\pi}\lim\limits_{N\to\infty}\sum\limits_{|k|\le N}\frac{-q_1+p_1}{|q-p+kl\boldsymbol e_2|^2}+W_\varepsilon\right)+O(\sigma).		
		\end{split}
	\end{equation*}
	Hence we can derive
	\begin{equation}\label{7-4}
		\begin{split}
			\int_{\mathbb R\times (-l/2,l/2)}E_\varepsilon Z_\varepsilon dx=C\left(\frac{1}{2\pi}\lim\limits_{N\to\infty}\sum\limits_{|k|\le N}\frac{p_1-q_1}{|q-p+kl\boldsymbol e_2|^2}+W_\varepsilon\right)+o_\varepsilon(1).
		\end{split}
	\end{equation}
	
	Recall that $\sigma(\varepsilon)$ satisfies assumption \textbf{(H)} with $\tau=\min\{\gamma_2,2\}$. We also have
	\begin{equation}\label{7-5}
		\begin{split}
			\int_{\mathbb R\times (-l/2,l/2)}R_\varepsilon(\omega_\varepsilon)Z_\varepsilon dx&=\int_{\mathbb R_-\times (-l/2,l/2)}R_\varepsilon(\omega_\varepsilon)Z_\varepsilon dx+\int_{\mathbb R_+\times (-l/2,l/2)}R_\varepsilon(\omega_\varepsilon)Z_\varepsilon dx\\
			&\le C\|\omega_\varepsilon\|_*^{\min\{\gamma_1,2\}}\varepsilon^{-1}+C\|\omega_\varepsilon\|_*^{\min\{\gamma_2,2\}}\sigma^{-1}\\
			&\le C\varepsilon^{\min\{\gamma_1-1, 1\}}+C\varepsilon^{\min\{\gamma_2, 2\}}\sigma^{-1}=o_\varepsilon(1),
		\end{split}
	\end{equation}
    and
	\begin{equation}\label{7-6}
		\begin{split}
			\int_{\mathbb R\times (-l/2,l/2)}\mathbb{L}_\varepsilon\omega_\varepsilon Z_\varepsilon dx&=\int_{\mathbb R_-\times (-l/2,l/2)}\mathbb{L}_\varepsilon\omega_\varepsilon Z_\varepsilon dx+\int_{\mathbb R_+\times (-l/2,l/2)}\mathbb{L}_\varepsilon\omega_\varepsilon Z_\varepsilon dx\\
			&\le C\|\omega_\varepsilon\|_*\varepsilon^{\min\{\gamma_1-2,0\}}+C\|\omega_\varepsilon\|_*\sigma^{\min\{\gamma_2-2,0\}}\\
			&\le C\varepsilon^{\min\{\gamma_1-1,1\}}+C\varepsilon\cdot\sigma^{\min\{\gamma_2-2,0\}}=o_\varepsilon(1).
		\end{split}
	\end{equation}
	Using \eqref{7-4} \eqref{7-5} and \eqref{7-6}, we arrive at the conclusion.	

\end{proof}

Having made all the necessary preparation, now we are in a position to prove Theorem \ref{thm1} and  \ref{thm2} for the case $s=1$.

{\bf Proof of Theorem \ref{thm1} and  \ref{thm2} with $s=1$:}
We are to solve $\alpha_\varepsilon=0$ directly. Since it holds
\begin{equation*}
	\int_{\mathbb R\times (-l/2,l/2)}f_u(x,\Psi_\varepsilon)Z_\varepsilon^2dx>0,
\end{equation*}
the condition $\alpha_\varepsilon=0$ is equivalent to
\begin{equation*}
	W_\varepsilon=\frac{1}{2\pi}\lim\limits_{N\to\infty}\sum\limits_{|k|\le N}\frac{-p_1+q_1}{|q-p+kl\boldsymbol e_2|^2}+o_\varepsilon(1)
\end{equation*}
by Lemma \ref{lem7-2}. Hence if $a=0$, $W_\varepsilon$ should satisfy
\begin{equation*}
	W_\varepsilon=\frac{1}{2\pi}\lim\limits_{N\to\infty}\sum\limits_{|k|\le N}\frac{2d}{4d^2+k^2l^2}+o_\varepsilon(1);
\end{equation*}
while for $a=l/4$, the condition turns to be
\begin{equation*}
	W_\varepsilon=\frac{1}{2\pi}\lim\limits_{N\to\infty}\sum\limits_{|k|\le N}\frac{2d}{4{d}^2+(kl+\frac{l}{2})^2}+o_\varepsilon(1).
\end{equation*}
The weak convergence of $\vartheta_{0,\varepsilon}$ is obvious as $\varepsilon\to 0$. By the choice of $p,q$ we get the existence of $W_\varepsilon$ so that
the corresponding $\alpha_\varepsilon=0$.  $C^1$ property follows from regularity theory for Laplacian. Thus the proof is complete.
\qed

\section{General result for $a\in (0,l/2)$}
In this section, we will investigate the general case $a\in(0,l/2)$. Due to the loss of symmetry with respect to $x_1$-direction, the travelling speed $\mathbf U_\varepsilon$ is no longer $x_2$-directional when $a\neq 0$ and $a\neq l/4$ hold simultaneously. For simplicity, we still denote the $l$-periodicity restricted in $\mathbb{R}\times (-l,l)$ as $l$-symmetry, and make the following decomposition for the uniform travelling speed
\begin{equation}\label{8-1}
	\mathbf U_\varepsilon=\overline W_\varepsilon \boldsymbol e_1+W_\varepsilon \boldsymbol e_2.
\end{equation}
Then for $0<s<1$, the aim is to find a family of $l$-symmetry solutions to
\begin{equation}\label{8-2}
	\begin{cases}
		(-\Delta)_*^s\psi=\varepsilon^{(2-2s)\gamma_1-2}(\psi+W_\varepsilon x_1-\overline W_\varepsilon x_2-\varepsilon^{2s-2}\lambda_+)_+^{\gamma_1}\chi_{B_r(p)}\\
		\,\,\, \ \ \ \ \ \ \ \ -\sigma^{(2-2s)\gamma_2-2}(-\psi-W_\varepsilon x_1+\overline W_\varepsilon x_2-\sigma^{2s-2}\lambda_-)_+^{\gamma_2}\chi_{B_r(q)} \ \ \ \ \ \text{in} \ \ \mathbb R\times (-l/2,l/2),\\
		\psi(x)\to 0 \ \ \ \text{as} \ \  |x_1|\to \infty.
	\end{cases}
\end{equation}
For $s=1$, the equation comes to be
\begin{equation}\label{8-3}
	\begin{cases}
		-\Delta\psi=\varepsilon^{-2}\left(\psi+W_\varepsilon x_1-\overline W_\varepsilon x_2 -\frac{\lambda_+}{2\pi}\ln\frac{1}{\varepsilon}\right)_+^{\gamma_1}\chi_{B_r(p)}\\
		\ \ \ \ \ \ \ \ \ \ \ -\sigma^{-2}\left(-\psi-W_\varepsilon x_1+\overline W_\varepsilon x_2-\frac{\lambda_-}{2\pi}\ln\frac{1}{\sigma}\right)_+^{\gamma_2}\chi_{B_r(q)} \ \ \ \ \ \ \ \ \ \ \text{in} \ \ \mathbb R\times (-l/2,l/2),\\
		\psi(x)\to 0 \ \ \ \text{as} \ \  |x_1|\to \infty.
	\end{cases}
\end{equation}
Here, $p$ and $q$ are also given by
\begin{equation*}
	p=(-d, -a), \ \ \  q=(d, a).
\end{equation*}
However, different from our assumption in Section 2 and 3, the parameter $a$ can be chosen in $(0,l/2)$ arbitrarily in this situation, and $d$ can take any nonnegative values.

Readers can easily verify that the construction of approximate solutions is almost the same as before, and an $l$-symmetric solution to \eqref{8-2} or \eqref{8-3} has the form
\begin{equation*}
	\psi_\varepsilon(x)=\Psi_\varepsilon(x)+\omega_\varepsilon(x),
\end{equation*}
with $x\in \mathbb{R}\times (-l/2,l/2)$, and $\omega_\varepsilon(x)$ an $l$-symmetric error term. Using the same notations as in previous sections, we are to consider the equation
\begin{equation}\label{8-4}
	\mathbb L_\varepsilon\omega_\varepsilon=-E_\varepsilon+R_\varepsilon(\omega_\varepsilon) \ \ \ \ \ \ \  \text{in} \ \ \mathbb R\times (-l/2,l/2),
\end{equation}
where the definition of $\mathbb L_\varepsilon$, $E_\varepsilon$ and $R_\varepsilon$ are given in Section 2 and 3. Compared with the case $a=0$ or $l/4$, the essential difference for general $a\in (0,l/2)$ on linear theory is: The kernel of $\mathbb L_\varepsilon$ is $2$-dimensional due to the loss of symmetry. Besides $Z_\varepsilon$ introduced before, there exists another basis of the kernel. When $0<s<1$, it is
\begin{equation*}
	\overline Z_\varepsilon=\varepsilon^{2s-2}\sum\limits_{k\in \mathbb Z}\partial_{x_2}U_{1,\varepsilon}(x+kl\boldsymbol e_2)-\sigma^{2s-2}\sum\limits_{k\in \mathbb Z}\partial_{x_2}U_{2,\varepsilon}(x+kl\boldsymbol e_2);
\end{equation*}
while for $s=1$, it turns to be
\begin{equation*}
	\overline Z_\varepsilon=\sum\limits_{k\in \mathbb Z}\partial_{x_2}V_{1,\varepsilon}(x+kl\boldsymbol e_2)-\sum\limits_{k\in \mathbb Z}\partial_{x_2}V_{2,\varepsilon}(x+kl\boldsymbol e_2).
\end{equation*}
Recall the nondegeneracy results in Theorem \ref{thm3-1} and Theorem \ref{thm6-1}. Using the radial symmetry of ground states, $Z_\varepsilon$ and $\overline Z_\varepsilon$ satisfy the approximate orthogonal condition
\begin{equation}\label{8-5}
	\int_{\mathbb R\times (-l/2,l/2)} f_u(x,\Psi_\varepsilon) Z_\varepsilon(x) 	\overline Z_\varepsilon(x) dx=O_\varepsilon(1)\cdot (\sigma(\varepsilon))^{-1}
\end{equation}
Hence in general, the projected linear problem is
\begin{equation}\label{8-6}
	\begin{cases}
		\mathbb L_\varepsilon\omega_\varepsilon=h(x)+\alpha_{1,\varepsilon} f_u(x,\Psi_\varepsilon) Z_\varepsilon(x)+\alpha_{2,\varepsilon} f_u(x,\Psi_\varepsilon) \overline Z_\varepsilon(x) \ \ \ \ \ \text{in} \ \ \mathbb R\times (-l/2,l/2)\\
		\omega_\varepsilon(x)\to 0 \ \ \ \text{as} \ \ |x_1|\to\infty.
	\end{cases}
\end{equation}
together with orthogonal conditions
\begin{equation}\label{8-7}
	\int_{\mathbb R\times (-l/2,l/2)} f_u(x,\Psi_\varepsilon) Z_\varepsilon(x) \omega_\varepsilon(x)dx=0 \ \ \ \text{and} \ \ \ 	\int_{\mathbb R\times (-l/2,l/2)} f_u(x,\Psi_\varepsilon) \overline Z_\varepsilon(x) \omega_\varepsilon(x)dx=0.
\end{equation}
Employing the approximate orthogonal condition \eqref{8-5}, we can derive the linear theory for \eqref{8-6} and \eqref{8-7} just as we did in previous two sections. Then, using a contraction mapping argument, a family of solutions $\omega_\varepsilon$ to \eqref{8-6} and \eqref{8-7} is obtained, which satisfies the estimate
\begin{equation*}
	\|\omega_\varepsilon\|_*\le C\varepsilon^{3-2s}.
\end{equation*}
We leave out the details of proof for readers.

By the discussion given in Section 2 and 3, we only need to find suitable conditions such that $\alpha_{1,\varepsilon}=0$ and $\alpha_{2,\varepsilon}=0$ hold simultaneously. In view of \eqref{8-6}, we have
\begin{equation*}
	(-\Delta)_*^s\psi_\varepsilon=f(x,\psi_\varepsilon)+\alpha_{1,\varepsilon} f_u(x,\Psi_\varepsilon) Z_\varepsilon(x)+\alpha_{2,\varepsilon} f_u(x,\Psi_\varepsilon) \overline Z_\varepsilon(x) \ \ \ \ \  \text{in} \ \ \mathbb R\times (-l/2,l/2),
\end{equation*}
where $(-\Delta)_*^s$ equals $-\Delta$ when $s=1$. To apply energy method, we multiply the above equality by $Z_\varepsilon(x)$, $\overline Z_\varepsilon(x)$ separately. Using the approximate orthogonal condition \eqref{8-5} once more, and noticing that $\alpha_{i,\varepsilon}\le \sigma(\varepsilon)\cdot \|h\|_{**}\le \sigma(\varepsilon)\cdot\varepsilon^{3-2s}$, we deduce
\begin{equation*}
	\alpha_{1,\varepsilon}\int_{\mathbb R\times (-l/2,l/2)}f_u(x,\Psi_\varepsilon)Z_\varepsilon^2dx+o_\varepsilon(1)=\int_{\mathbb R\times (-l/2,l/2)}\left((-\Delta)_*^{s}\psi_\varepsilon-f(x,\psi_\varepsilon)\right)Z_\varepsilon dx.
\end{equation*}
and
\begin{equation*}
	\alpha_{2,\varepsilon}\int_{\mathbb R\times (-l/2,l/2)}f_u(x,\Psi_\varepsilon)\overline Z_\varepsilon^2dx+o_\varepsilon(1)=\int_{\mathbb R\times (-l/2,l/2)}\left((-\Delta)_*^{s}\psi_\varepsilon-f(x,\psi_\varepsilon)\right)\overline Z_\varepsilon dx.
\end{equation*}
Then, by a similar procedure as in the proofs of Lemma \ref{lem4-2} and \ref{lem7-2}, we can derive that
\begin{equation*}
	\int_{\mathbb R\times (-l/2,l/2)}\left((-\Delta)_*^s\psi_\varepsilon-f(x,\psi_\varepsilon)\right)Z_\varepsilon dx=C\left(\mathcal C_s\lim\limits_{N\to\infty}\sum\limits_{|k|\le N}\frac{p_1-q_1}{|q-p+kl\boldsymbol e_2|^{4-2s}}+W_\varepsilon\right)+o_\varepsilon(1),
\end{equation*}
and
\begin{equation*}
	\int_{\mathbb R\times (-l/2,l/2)}\left((-\Delta)_*^s\psi_\varepsilon-f(x,\psi_\varepsilon)\right)Z_\varepsilon dx=C\left(\mathcal C_s\lim\limits_{N\to\infty}\sum\limits_{|k|\le N}\frac{-p_2+q_2}{|q-p+kl\boldsymbol e_2|^{4-2s}}+\overline W_\varepsilon\right)+o_\varepsilon(1),
\end{equation*}
where $p_i,q_i$ denote the $i$-th coordinate of $p,q$ respectively for $i=1,2$. Thus conditions $\alpha_{1,\varepsilon}=0$ and $\alpha_{2,\varepsilon}=0$ turn to be
\begin{equation}\label{8-8}
	W_\varepsilon=\mathcal C_s\lim\limits_{N\to\infty}\sum\limits_{|k|\le N}\frac{2d}{(4d^2+(kl+2a)^2)^{2-s}}+o_\varepsilon(1),
\end{equation}
and
\begin{equation}\label{8-9}
	\overline W_\varepsilon=\mathcal C_s\lim\limits_{N\to\infty}\sum\limits_{|k|\le N}\frac{kl+2a}{(4{d}^2+(kl+2a)^2)^{2-s}}+o_\varepsilon(1).
\end{equation}
Combining \eqref{8-8} and \eqref{8-9}, we see that the reduction conditions are equivalent to
\begin{equation*}
	\mathbf U_\varepsilon=-\mathcal C_s\lim\limits_{N\to\infty}\sum\limits_{|k|\le N}\frac{(p-q+kl\boldsymbol e_2)^\perp}{|p-q+kl\boldsymbol e_2|^{4-2s}}+o_\varepsilon(1).
\end{equation*}
The weak convergence of $\psi_\varepsilon$ is obvious, and $C^1$ regularity is from a standard theory. So we have actually proved Theorem \ref{thm3}.

\phantom{s}
\thispagestyle{empty}

\end{document}